\algrenewcommand{\algorithmicrequire}{\textbf{Input:}}
\algrenewcommand{\algorithmicensure}{\textbf{Output:}}
\newtheorem{theorem}{Theorem}[section]
\newtheorem{lemma}[theorem]{Lemma} 
\newtheorem{proposition}[theorem]{Proposition} 
\newtheorem{corollary}[theorem]{Corollary}
\theoremstyle{definition}
\newtheorem{definition}{Definition}
\newcommand{\abs}[1]{\left|{#1}\right|}
\newcommand{\set}[1]{\left\{{#1}\right\}} 
\newcommand{\setof}[2]{\left\{{#1}\,:\,{#2}\right\}}
\newcommand{\symd}{\bigtriangleup}
\newcommand{\G}{\mathcal{G}}
\newcommand{\lex}[2]{L(#1,#2)}
\newcommand{\J}{J}
\renewcommand{\j}{j}
\newcommand{\bigj}[1]{\j\left( #1 \right)}
\renewcommand{\S}[3]{R(#1,#2;#3)}
\renewcommand{\l}{\ell}
\renewcommand{\L}{L}
\newcommand{\N}{\mathbb{N}}
\newcommand{\hind}{H_\text{ind}}
\DeclareMathOperator{\Hom}{Hom}
\begin{document}
\title{Homomorphisms into loop-threshold graphs}
\author{Jonathan Cutler}
\address{Department of Mathematical Sciences, Montclair State University, Montclair, NJ 07043}
\email{jonathan.cutler@montclair.edu}
\thanks{The first author was sponsored by the National Security Agency under Grant H98230-15-1-0016.  The United States Government is authorized to reproduce and distribute reprints notwithstanding any copyright notation herein.}
\author{Nicholas Kass}
\address{Department of Mathematics, University of Nebraska-Lincoln, Lincoln, NE 68588}
\email{nkass@huskers.unl.edu}

\begin{abstract}	
	Many problems in extremal graph theory correspond to questions involving homomorphisms into a fixed image graph.  Recently, there has been interest in maximizing the number of homomorphisms from graphs with a fixed number of vertices and edges into small image graphs. For the image graph $\hind$, the graph on two adjacent vertices, one of which is looped, each homomorphism from $G$ to $\hind$ corresponds to an independent set in $G$. It follows from the Kruskal-Katona theorem that the number of homomorphisms to $\hind$ is maximized by the lex graph, whose edges form an initial segment of the lex order. 
	
	A \emph{loop-threshold graph} is a graph built recursively from a single vertex, which may be looped or unlooped, by successively adding either a looped dominating vertex or an unlooped isolated vertex at each stage.  Thus, the graph $\hind$ is a loop-threshold graph.  We survey known results for maximizing the number of homomorphisms into small loop-threshold image graphs.  The only extremal homomorphism problem with a loop-threshold image graph on at most three vertices not yet solved is $\hind\cup E_1$, where extremal graphs are the union of a lex graph and an empty graph.  The only question that remains is the size of the lex component of the extremal graph.  While we cannot give an exact answer for every number of vertices and edges, we establish the significance of and give a bound for $\ell(m)$, the number of vertices in the lex component of the extremal graph with $m$ edges and at least $m+1$ vertices.
\end{abstract}
\maketitle

\section{Introduction} 
\label{sec:intro}

Many problems in classical extremal graph theory can be stated in terms of graph homomorphisms.  A \emph{homomorphism} from a graph $G$ to a graph $H$ is a function $\phi:V(G)\to V(H)$ such that $\phi(x)\phi(y)\in E(H)$ whenever $xy\in E(G)$, i.e., $\phi$ is an edge-preserving map.  We let $\Hom(G,H)$ be the set of homomorphisms from $G$ to $H$ and $\hom(G,H)=\abs{\Hom(G,H)}$.  If we take the image graph to be $\hind$, a path on two vertices with one vertex looped (see Figure~\ref{fig:hind}), then elements of $\hom(G,\hind)$ correspond to independent sets in $G$.  This is because vertices mapped to the unlooped vertex of $\hind$ form an independent set and there are no other restrictions on the map.
\begin{figure}[h]
	\begin{center}
	\includegraphics{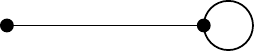}
	\end{center}
	\caption{The graph $\hind$.}\label{fig:hind}
\end{figure}
Another natural example arises from taking the image graph to be $K_q$, in which case elements of $\Hom(G,K_q)$ correspond to proper $q$-colorings of $G$.

An interesting class of problems arises by fixing an image graph $H$ and a class of graphs $\G$ and trying to determine which graphs $G\in \G$ maximize $\hom(G,H)$.  A lot of recent research has focused on this problem for the image graph $\hind$, which we saw above corresponds to independent sets, and so we let $i(G)=\hom(G,\hind)$.  Kahn \cite{K01} gave a bound for $i(G)$ when $G$ is an $r$-regular bipartite graph on $n$ vertices and Zhao \cite{Z10} extended this to general regular graphs.

\begin{theorem}[Kahn, Zhao]\label{theorem:kz}
	If $G$ is an $r$-regular graph on $n$ vertices, then
	\[
		i(G)\leq i(K_{r,r})^{n/2r}.
	\]
\end{theorem}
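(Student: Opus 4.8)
The plan is to combine two ingredients: Zhao's reduction of the general regular case to the bipartite case, and Kahn's entropy argument in the bipartite case. Throughout write $i(G)=\hom(G,\hind)$ for the number of independent sets of $G$, and note that $i(K_{r,r})=2^{r+1}-1$ (an independent set of $K_{r,r}$ lies entirely in one of the two parts, with the empty set double counted); the bound will be attained when $G$ is a disjoint union of copies of $K_{r,r}$.

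\emph{Step 1: reduction to bipartite graphs.} Given an $r$-regular graph $G$ on $n$ vertices, I would pass to the bipartite double cover $G\times K_2$, on vertex set $V(G)\times\{0,1\}$ with $(u,0)\adj(v,1)$ exactly when $uv\in E(G)$; this is an $r$-regular bipartite graph on $2n$ vertices. The crux is Zhao's inequality $i(G)^2\le i(G\times K_2)$. Identifying an independent set of $G\times K_2$ with a pair $(A,B)$ of subsets of $V(G)$ having no $G$-edge between them (equivalently $B\cap N(A)=\emptyset$), I would prove this by constructing an injection from ordered pairs $(I_1,I_2)$ of independent sets of $G$ into such pairs: since $(I_1,I_2)$ itself may fail to be valid (there can be edges between $I_1$ and $I_2$), the conflicting vertices must be reassigned to exactly one of the two sides in a way engineered to be reversible, using the independence of $I_1$ and $I_2$ to guarantee the output is valid. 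Granting this and Step 2, applying Step 2 to $G\times K_2$ gives $i(G\times K_2)\le i(K_{r,r})^{2n/2r}=i(K_{r,r})^{n/r}$, and taking square roots gives $i(G)\le i(K_{r,r})^{n/2r}$.

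\emph{Step 2: the bipartite case.} Here let $G$ be $r$-regular and bipartite with parts $L,R$, so that $|L|=|R|=n/2$. I would take $\mathbf I$ to be a uniformly random independent set, so $H(\mathbf I)=\log_2 i(G)$, and split it as $\mathbf I_L=\mathbf I\cap L$, $\mathbf I_R=\mathbf I\cap R$; the chain rule gives $\log_2 i(G)=H(\mathbf I_R)+H(\mathbf I_L\mid\mathbf I_R)$, and I would bound the two terms separately. Conditioned on $\mathbf I_R$, the set $\mathbf I_L$ is uniform over subsets of $L\setminus N(\mathbf I_R)$, so $H(\mathbf I_L\mid\mathbf I_R)=\mathbb E\,|L\setminus N(\mathbf I_R)|=\sum_{v\in L}\Pr[N(v)\cap\mathbf I=\emptyset]$; and since the $r$-element sets $\{N(v)\}_{v\in L}$ cover $R$ with each vertex of $R$ lying in exactly $r$ of them, Shearer's entropy lemma gives $H(\mathbf I_R)\le\frac1r\sum_{v\in L}H(\mathbf I\cap N(v))$. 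Adding these,
\[
  \log_2 i(G)\le\sum_{v\in L}\left(\tfrac1r H(\mathbf I\cap N(v))+\Pr[\mathbf I\cap N(v)=\emptyset]\right),
\]
so it would remain to prove the purely local estimate that for any random subset $\mathbf J$ of an $r$-element set,
\[
  H(\mathbf J)+r\cdot\Pr[\mathbf J=\emptyset]\le\log_2(2^{r+1}-1)=\log_2 i(K_{r,r}).
\]
This is a one-line optimization: writing $r\Pr[\mathbf J=\emptyset]=\Pr[\mathbf J=\emptyset]\log_2 2^r$, the left-hand side equals $\log_2(2^{r+1}-1)$ minus the relative entropy from the law of $\mathbf J$ to the distribution $\nu$ that puts mass proportional to $2^r$ on $\emptyset$ and proportional to $1$ on every other subset, so it is at most $\log_2(2^{r+1}-1)$, with equality iff $\mathbf J\sim\nu$ --- and $\nu$ is precisely the law of $\mathbf I\cap N(v)$ when $G=K_{r,r}$. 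Summing over the $n/2$ vertices of $L$ gives $\log_2 i(G)\le\frac n{2r}\log_2 i(K_{r,r})$.

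The step I expect to be the main obstacle is Zhao's lemma $i(G)^2\le i(G\times K_2)$: once Shearer's lemma is available the bipartite case is a short computation, whereas the injection has to be set up carefully so as to be reversible. (An alternative to the entropy method in Step 2 that avoids $H(\cdot)$ entirely would be to apply Finner's generalized H\"older inequality to the identity $i(G)=\sum_{I\subseteq L}\prod_{v\in R}\bigl(1+\1[N(v)\cap I=\emptyset]\bigr)$ with the same $r$-fold cover $\{N(v)\}_{v\in R}$ of $L$, noting that $\sum_{J\subseteq N(v)}(1+\1[J=\emptyset])^r=2^{r+1}-1=i(K_{r,r})$.)
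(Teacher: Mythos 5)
The paper does not prove this theorem at all --- it is quoted as a known result of Kahn (bipartite case) and Zhao (general case), with citations to \cite{K01} and \cite{Z10} --- so there is nothing in the paper to compare your argument against line by line. Your sketch follows the standard two-part route from those papers, and the parts you actually carry out are correct: the chain-rule decomposition $H(\mathbf I)=H(\mathbf I_R)+H(\mathbf I_L\mid \mathbf I_R)$, the identification of the conditional entropy with $\sum_{v\in L}\Pr[N(v)\cap\mathbf I=\emptyset]$, the Shearer bound using the $r$-fold cover $\{N(v)\}_{v\in L}$ of $R$, the relative-entropy optimization giving $H(\mathbf J)+r\Pr[\mathbf J=\emptyset]\le\log_2(2^{r+1}-1)=\log_2 i(K_{r,r})$, and the final deduction $i(G)^2\le i(G\times K_2)\le i(K_{r,r})^{n/r}$ all check out.

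The one genuine gap is the one you yourself flag: Zhao's inequality $i(G)^2\le i(G\times K_2)$ is asserted but not proved. Saying that the conflicting vertices ``must be reassigned \ldots in a way engineered to be reversible'' names the difficulty without resolving it; the actual argument requires looking at the subgraph of $G$ induced on the symmetric difference $I_1\mathbin{\bigtriangleup} I_2$, identifying the connected components that contain a ``bad'' edge (an edge from $I_1\setminus I_2$ to $I_2\setminus I_1$), and swapping the roles of $I_1$ and $I_2$ on exactly those components --- independence of $I_1$ and $I_2$ guarantees the result has no cross edges, and injectivity follows because the set of swapped components is recoverable from the output as the set of bad components of the reversed configuration. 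Without some such canonical-swap construction the reduction in Step~1 is incomplete, and with it your sketch becomes a full proof. As a proof \emph{for this paper} the whole exercise is moot (the authors intentionally defer to the literature), but as a reconstruction of the Kahn--Zhao argument it is accurate apart from that deferred lemma.
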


Galvin and Tetali \cite{GT04} extended the bipartite case of Theorem~\ref{theorem:kz} to general homomorphisms to any image graph.  Interestingly, the extension to nonbipartite graphs in the general case is not true.  (See \cite{G12} for more details.)  Also, the problem of maximizing the number of independent sets in graphs with given minimum degree has been well-studied (see, e.g., \cite{ACM, AM, CR14, CR15, EG, G11, GLS}).

In this paper, we will be interested in maximizing the number of homomorphisms over graphs of fixed order (number of vertices) and size (number of edges).  We let $\G(n,m)$ be the set of graphs with $n$ vertices and $m$ edges.  The problem of maximizing the number of independent sets over $\G(n,m)$ follows from the Kruskal-Katona \cite{K63,K68} theorem.  This is because independent sets in a graph $G$ correspond exactly to those sets that are not in the upper shadow of $G$, where $G$ is thought of as a set system on the vertex set.  Recall that the \emph{lexicographic order} on subsets of $[n]$ is defined by $A<B$ if $\min(A\symd B)\in A$.  Define the \emph{lex graph of order $n$ and size $m$}, denoted $L(n,m)$, to be the graph with vertex set $[n]=\set{1,2,\ldots,n}$ and edges consisting of the initial $m$ elements of $\binom{[n]}2=\setof{e\subset [n]}{\abs{e}=2}$ according to the lexicographic order.  We have the following, as was noted in \cite{CR11}.  

\begin{theorem}\label{thm:ind}
	If $G\in \G(n,m)$, then
	\[
		i(G)\leq i(L(n,m)),
	\]
	with equality if and only if $G\cong L(n,m)$.
\end{theorem}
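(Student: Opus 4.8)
The plan is to deduce Theorem~\ref{thm:ind} from the Kruskal--Katona theorem, using the correspondence already recorded in the introduction: the independent sets of $G$ are exactly the subsets of $V(G)=[n]$ that contain no edge of $G$. Thus, writing $\mathcal{U}(G)\of\powerset([n])$ for the up-closure of $E(G)$ (the family of all vertex sets containing at least one edge), we have $i(G)=2^n-\abs{\mathcal{U}(G)}$, and moreover, level by level, the number of independent $k$-subsets of $G$ equals $\binom{n}{k}-\abs{\partial^+_k E(G)}$, where $\partial^+_k E(G)$ denotes the $k$-th \emph{upper shadow} of $E(G)$, i.e.\ the set of $k$-element subsets of $[n]$ that contain an edge of $G$. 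Consequently it suffices to prove that
\[
	\abs{\partial^+_k E(G)} \ge \abs{\partial^+_k E(L(n,m))} \qquad\text{for every } k,
\]
since adding these inequalities over all $k$ then yields $i(G)\le i(L(n,m))$; the equality case will follow by analysing when each of these is tight.

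For the shadow inequalities I would invoke the Kruskal--Katona theorem in its upper-shadow form: among all $d$-uniform families $\mathcal{A}\of\binom{[n]}{d}$ with $\abs{\mathcal{A}}=m$, the initial segment of the lexicographic order on $d$-subsets simultaneously minimizes $\abs{\partial^+_k\mathcal{A}}$ for every $k\ge d$. This is the complement of the standard lower-shadow/colex statement: complementing every set ($S\mapsto[n]\setminus S$) turns upper shadows into lower shadows and, after the harmless relabeling $x\mapsto n+1-x$ (which changes neither the size of a shadow nor the isomorphism type of a graph), turns colex-initial segments into lex-initial segments; the fact that the upper shadow of a lex-initial segment is again a lex-initial segment, together with the monotonicity of shadow size in the size of the family, lets one iterate the basic theorem across all levels at once. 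Applying this with $d=2$: by construction $E(L(n,m))$ \emph{is} the lex-initial segment of $\binom{[n]}{2}$ of size $m$, so it minimizes $\abs{\partial^+_k E(\cdot)}$ for every $k$, and summing over $k$ gives $i(G)\le i(L(n,m))$.

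For the equality statement, suppose $i(G)=i(L(n,m))$. Then each of the inequalities above must be an equality, so $\abs{\partial^+_k E(G)}=\abs{\partial^+_k E(L(n,m))}$ for all $k$, and I would feed this into the equality case of the Kruskal--Katona theorem, working upward from the smallest nontrivial levels, to force $E(G)$ to be, up to a relabeling of the ground set, the lex-initial segment, i.e.\ $G\cong L(n,m)$. I expect this equality analysis to be the main obstacle: the characterization of equality in Kruskal--Katona is considerably more delicate than the inequality itself, since a minimizer of a single upper shadow need not be unique even up to isomorphism, so one must combine the tightness information across all levels to recover $G\cong L(n,m)$. The remainder—matching the two shadow profiles level by level and tracking the equality case through the complementation and relabeling used above—is routine bookkeeping.
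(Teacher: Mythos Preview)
The paper does not actually prove Theorem~\ref{thm:ind}; it merely records that the result follows from the Kruskal--Katona theorem via the upper-shadow correspondence and cites \cite{CR11} and \cite{W} for the details. Your proposal is exactly this Kruskal--Katona route, fleshed out: the inequality argument (complementing to pass from colex/lower shadows to lex/upper shadows, then summing over levels) is the standard one and is correct as written. For the equality case you correctly flag that the uniqueness statement in Kruskal--Katona is the delicate point; to finish it cleanly one typically invokes either the Lov\'asz form of the theorem or the F\"uredi--Griggs characterization of extremal families, after which the bookkeeping you describe goes through. So your approach coincides with what the paper sketches, carried to the level of detail the paper itself omits.
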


Theorem~\ref{thm:ind} was also proved independently by Wood \cite{W}.  In fact, since the Kruskal-Katona theorem implies that the lex graph is extremal for independent sets of any fixed size and, as was noted in \cite{CR11a}, we have the following, where $i_t(G)$ is the number of independent sets of size $t$ in $G$.

\begin{theorem}\label{thm:levelind}
	If $G\in \G(n,m)$, then
	\[
		i_t(G)\leq i_t(L(n,m)),
	\]
	with equality if and only if $G\cong L(n,m)$.
\end{theorem}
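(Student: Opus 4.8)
The plan is to reformulate $i_t(G)$ as a quantity governed by the Kruskal–Katona theorem. An independent set of size $t$ in $G$ is exactly a $t$-element subset of $V(G)=[n]$ containing no edge of $G$; equivalently, it is a $t$-clique of the complement $\overline G$, and also a $t$-set lying outside the $(t-2)$-fold upper shadow of the $2$-uniform family $E(G)$. Thus $i_t(G)=k_t(\overline G)$, where $k_t$ counts $t$-cliques, and since $\overline G$ has $n$ vertices and $\binom{n}{2}-m$ edges the theorem becomes the assertion that, among graphs of order $n$ and size $\binom{n}{2}-m$, the colex graph uniquely maximizes the number of $t$-cliques.

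This is a standard form of the Kruskal–Katona theorem: a $2$-uniform family of size $N$ inside $\binom{[n]}{2}$ has at most as many ``$t$-cliques'' as the colex-initial segment of size $N$, and one extracts it from the usual shadow statement by iterating the (lower) shadow of the clique family $\mathcal{K}_t$ down to $E(G)$ and bounding $|\mathcal{K}_t|$. Applying this with $N=\binom{n}{2}-m$ gives $i_t(G)=k_t(\overline G)\le k_t(C)$, where $C$ is the colex graph on $[n]$ with $\binom{n}{2}-m$ edges. It remains to identify $C$: its complement $\overline C$ has $m$ edges, which form the \emph{final} $m$ pairs of the colex order on $\binom{[n]}{2}$, and the relabeling $i\mapsto n+1-i$ reverses the colex order, hence carries this colex-final segment onto the lex-initial segment of size $m$, i.e.\ onto $E(L(n,m))$. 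Therefore $C\cong\overline{L(n,m)}$ and $i_t(G)\le k_t(C)=k_t(\overline{L(n,m)})=i_t(L(n,m))$.

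For the equality clause one invokes the uniqueness in the Kruskal–Katona theorem (in the sharp forms due to Füredi–Griggs and to Mörs): for $t\ge 3$ the colex graph is, up to isomorphism, the only graph of given order and size attaining the maximum number of $t$-cliques, so equality forces $\overline G\cong\overline{L(n,m)}$, i.e.\ $G\cong L(n,m)$. (When $t\le 2$ the count $i_t(G)$ is the same for every $G\in\G(n,m)$, so the ``only if'' is to be read in the range $t\ge 3$, or at the level of the total count $i(G)=\sum_t i_t(G)$, where a single strict level suffices.) I expect the main obstacle to be exactly this equality characterization: the version of Kruskal–Katona one usually quotes is only an inequality, and pinning down uniqueness is comparatively delicate; a self-contained alternative would be to run the standard compression argument on $E(G)$ and track when a compression strictly decreases the iterated upper shadow. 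A minor further nuisance is keeping the lex order, the colex order, and its image under $i\mapsto n+1-i$ straight when identifying the extremal graph.
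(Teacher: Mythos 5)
Your proposal is correct and takes essentially the same route as the paper, which offers no proof beyond the observation that the statement is the Kruskal--Katona theorem in disguise: the paper phrases the reduction via $t$-sets outside the iterated upper shadow of $E(G)$ being minimized by the lex initial segment, while you pass to the complement and count $t$-cliques against the colex initial segment, and these two formulations are carried into one another by the relabeling $i\mapsto n+1-i$ exactly as you verify. Your remarks on the equality clause are also apt --- the uniqueness does require the sharpened forms of Kruskal--Katona you cite, and the statement as printed is degenerate for $t\le 2$ (where every $G\in\G(n,m)$ attains equality), a minor imprecision in the paper that you are right to flag.
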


This paper will begin with a survey of results about maximizing $\hom(G,H)$ where $G\in \G(n,m)$ and $H$ is a small fixed image graph.  In Section~\ref{sec:thres}, we introduce threshold graphs which will turn out to be extremal graphs for a set of image graphs consisting of what we call loop-threshold graphs.  In Section~\ref{sec:jhoms}, we begin our investigation into the one remaining open extremal problem for loop-threshold image graphs on at most three vertices.  Finally, in Section~\ref{sec:lm}, we give bounds on a function related to this extremal problem. 

	
\section{Threshold and loop-threshold graphs} 
\label{sec:thres}

Threshold graphs play a key role in the investigation of extremal problems related to graph homomorphisms.  There are many characterizations of threshold graphs (see, e.g., \cite{MP}), but the most useful one for our purposes is as follows.

\begin{definition}
	A graph $G$ is a \emph{threshold graph} if it can be constructed recursively from $K_1$ by successively adding either a dominating vertex or an isolated vertex.
\end{definition}

With this recursive definition in hand, we can observe that threshold graphs on $n$ vertices can by represented by binary sequences of length $n-1$, which we call the \emph{code} of the threshold graph.  We write $1$ for a dominating vertex and a $0$ for an isolated vertex.  As the code of the first vertex is irrelevant, we omit it from the code.  Following convention, we write the code from right to left.  Note that for a given code, ones are adjacent to all vertices to their right in the code and ones to their left.  Zeroes are adjacent only to ones to their left.  Note that threshold graphs have at most one nontrivial component.  

We often use superscripts to denote a string of the same symbol in the code of a threshold graphs so, $0^p1^q$ is the code with $q$ ones followed by $p$ zeroes (from right to left).  We note that any lex graph is a threshold graph.  In general, the code of a lex graph is of the form $1^p0^q1^a0^r$, where $p$, $q$, and $r$ are all nonnegative integers and $a$ may be either $0$ or $1$ (where we write $x^0$ for the empty string).

We say that a graph $G$ is \emph{$H$-extremal} for a graph $H$ (which may have loops) if 
\[
	\hom(G,H)=\max\setof{\hom(G',H)}{\text{$n(G')=n(G)$ and $e(G')=e(G)$}}.
\]
It turns out that for many image graphs $H$, one is able to prove that there is always an $H$-extremal graph that is threshold.  Such image graphs are called loop-threshold graphs.

\begin{definition}
	A graph with (perhaps) loops is \emph{loop-threshold} if it can be obtained from $K_1$, or $K_1$ with a loop, by successively adding an unlooped isolated vertex, or a looped dominating vertex.
\end{definition}

Once again, loop-threshold graphs on $n$ vertices can be associated with a binary code, except now the length of the code is $n$ (rather than $n-1$).  This is due to the fact that the code of the first vertex is now relevant since it determines whether that vertex is looped or not.  We note that $\hind$ is a loop-threshold graph with code $10$.  The following was proved in \cite{CR12}.

\begin{theorem}\label{thm:thrtothr}
	If $H$ is any loop-threshold graph and $n$ and $m$ are any non-negative integers with $0\leq m\leq \binom{n}2$, then there is a threshold graph $T\in \G(n,m)$ such that $T$ is $H$-extremal.
\end{theorem}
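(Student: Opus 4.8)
The plan is to proceed by induction on $n$, the number of vertices of the image graph $H$, using a "compression" or "shifting" argument on the source graph $G$. The base case is $H = K_1$ (possibly looped): if $H$ is the looped vertex, then $\hom(G,H) = 1$ for every $G$ and the claim is trivial; if $H = K_1$ unlooped, then $\hom(G,H) = 1$ if $G$ is edgeless and $0$ otherwise, so there is nothing to prove since the only relevant $\G(n,m)$ has $m = 0$. For the inductive step, write $H = H' * v$ where $v$ is either (i) an unlooped isolated vertex added to $H'$, or (ii) a looped dominating vertex added to $H'$.

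In case (i), where $v$ is isolated and unlooped, every homomorphism $\phi : G \to H$ splits according to the set $S = \phi^{-1}(v)$: since $v$ has no neighbors (not even itself), $S$ must be an independent set in $G$, and the restriction of $\phi$ to $G \setminus S$ is a homomorphism into $H'$. Thus
\[
	\hom(G,H) = \sum_{S \text{ independent in } G} \hom(G - S, H').
\]
In case (ii), where $v$ is looped and dominates all of $H'$ (and itself), we instead partition by $T = \phi^{-1}(V(H'))$: the set $T$ can be \emph{any} subset of $V(G)$, the restriction of $\phi$ to $G[T]$ is a homomorphism into $H'$, and the vertices outside $T$ are entirely unconstrained since $v$ is dominating. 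Hence
\[
	\hom(G,H) = \sum_{T \of V(G)} \hom(G[T], H').
\]
The idea is that in both cases $\hom(G,H)$ is expressed as a sum, over certain induced subgraphs of $G$, of $\hom(\cdot, H')$, and by the inductive hypothesis each term is (weakly) maximized when the subgraph in question is itself as "threshold-like" as possible. More precisely, one wants to show that applying a suitable compression operator to $G$ — one that moves $G$ toward a threshold graph without changing $n$ or $m$ — does not decrease any relevant term, and therefore does not decrease $\hom(G,H)$. Iterating the compression until it stabilizes yields a threshold graph, and a threshold graph on a fixed number of vertices and edges is essentially unique up to the placement of isolated vertices, so the extremal graph can be taken to be threshold.

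The main obstacle, and the step requiring real care, is choosing the compression operator and proving the monotonicity of each summand. The natural candidate is a Kruskal--Katona-style shift: relabel $V(G) = [n]$ and replace $G$ by the graph whose edge set is the initial segment of the colex (or lex) order among all graphs with the same degree-sequence-type data, or more concretely use a vertex-neighborhood shifting that at each step makes the neighborhoods nested ($N(1) \supseteq N(2) \supseteq \cdots$), which is exactly the threshold condition. One then must verify that such a shift interacts well with the operations "delete an independent set $S$" and "take an induced subgraph $G[T]$": the subtlety is that the family of subgraphs appearing in the sum changes under the shift, so one cannot argue term-by-term with a fixed indexing. The cleanest route is probably to set up a bijective or injective correspondence between homomorphisms $G \to H$ and homomorphisms $G^* \to H$ for the shifted graph $G^*$, built directly from the combinatorial shift on vertices, and to check that no homomorphism is lost. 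This is where one would invoke, or re-prove in this weighted setting, the Kruskal--Katona machinery (equivalently Theorem~\ref{thm:levelind} applied iteratively), and it is the technical heart of the argument. Once the correspondence is established, the induction closes and the theorem follows; I would expect the original proof in \cite{CR12} to carry out essentially this program, with the compression step being the nontrivial content.
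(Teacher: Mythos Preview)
The paper does not actually prove this theorem; it merely states it and cites \cite{CR12}. So there is no in-paper argument to compare against, only the original Cutler--Radcliffe proof.

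That said, there is a concrete error in your decomposition for case (i). When $v$ is an \emph{unlooped isolated} vertex of $H$, the preimage $S=\phi^{-1}(v)$ is not just an independent set: since $v$ has no neighbours in $H$ at all, no edge of $G$ can have even one endpoint in $S$. Thus $S$ must consist entirely of isolated vertices of $G$, and the formula
\[
\hom(G,H)=\sum_{S\text{ independent in }G}\hom(G-S,H')
\]
is false in general. The correct statement is that if $G$ has $c$ isolates and non-trivial part $G_0$, then $\hom(G,H)=\abs{V(H)}^{c}\,\hom(G_0,H')$. This actually makes case (i) much easier, not harder, but it also shows that the induction on $\abs{V(H)}$ as you set it up does not reduce to a sum over independent sets.

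More broadly, you yourself identify the real difficulty: the induction on $\abs{V(H)}$ does not close, because the induced subgraphs appearing in the case (ii) sum vary with $G$, and the inductive hypothesis only promises a single extremal threshold graph for each fixed vertex and edge count. You then abandon the induction in favour of a direct compression on $G$ together with an injection $\Hom(G,H)\hookrightarrow\Hom(G^{*},H)$. That is indeed the route taken in \cite{CR12}: one applies a neighbourhood compression (shifting edges from one vertex of $G$ to another so as to nest their neighbourhoods) and checks, using the nested structure of the looped and unlooped vertices in a loop-threshold $H$, that homomorphisms can only be gained. No induction on $\abs{V(H)}$ is used there. Your proposal gestures at this but does not carry out the injection, which is precisely the content of the theorem; as written, the proposal is a plan rather than a proof, and the one step it does write down in detail (the case (i) decomposition) is incorrect.
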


The purpose of the remainder of this section is to survey extremal results for $\hom(G,H)$ where $H$ is a loop-threshold graph on at most three vertices.  We begin by making a couple of trivial observations.  The first we state without proof.

\begin{proposition}
	If $q\geq 1$ is an integer and $H$ is any loop-threshold graph with code $0^q$, then $\hom(G,H)=0$ for any graph $G$ with at least one edge.
\end{proposition}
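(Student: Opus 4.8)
The plan is to unwind the code notation and observe that there is simply no edge in $H$ to receive the image of an edge of $G$. First I would note that a loop-threshold graph with code $0^q$ is, by the recursive definition, obtained from an \emph{unlooped} $K_1$ (since the first code symbol is $0$) by successively adjoining $q-1$ unlooped isolated vertices. In particular no looped dominating vertex is ever added, so at no stage of the construction is any edge (or loop) created. Hence $H$ is the empty graph $E_q$ on $q$ vertices, with $E(H)=\emptyset$.

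With this identification in hand the statement is immediate. Since $G$ has at least one edge, fix $xy\in E(G)$. Any homomorphism $\phi\colon V(G)\to V(H)$ would be required to satisfy $\phi(x)\phi(y)\in E(H)$; but $E(H)=\emptyset$, so no such $\phi$ exists. Therefore $\Hom(G,H)=\0$ and $\hom(G,H)=0$, as claimed.

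The only thing that needs care here is the bookkeeping on the code convention — that the length-$q$ code $0^q$ for a loop-threshold graph really does force the initial vertex to be unlooped and every subsequent vertex to be isolated and unlooped — and this follows directly from the definition given just above the statement. There is no genuine obstacle; the proposition is essentially a sanity check on the definitions, recording the degenerate case that is excluded from the later analysis.
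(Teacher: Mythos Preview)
Your proof is correct; the paper in fact states this proposition without proof, calling it a trivial observation. Your unwinding of the code $0^q$ to identify $H$ with $E_q$ and the one-line homomorphism argument is exactly the intended sanity check.
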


Another simple result holds for any image loop-threshold graph whose code is all ones.

\begin{proposition}
	If $p\geq 1$ is an integer and $H$ is a loop-threshold graph with code $1^p$ and $G$ is a graph with $n$ vertices, then $\hom(G,H)=p^n$.
\end{proposition}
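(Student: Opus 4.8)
The plan is to unpack the code $1^p$ into an explicit description of $H$, observe that $H$ imposes no constraint whatsoever on a vertex map, and then count all maps.

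First I would trace through the recursive construction. The code $1^p$ means we start from $K_1$ with a loop and then add $p-1$ looped dominating vertices in succession. At each stage the newly added vertex is looped and adjacent to every previously present vertex. Hence the final graph $H$ has vertex set of size $p$, a loop at every vertex, and an edge between every pair of distinct vertices; that is, $H$ is the complete graph on $p$ vertices with all loops present. The key structural fact to extract is: for every ordered pair $(u,v)\in V(H)\times V(H)$, whether or not $u=v$, we have $uv\in E(H)$.

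Next I would use this to identify $\Hom(G,H)$ with the set of all functions $V(G)\to V(H)$. Indeed, let $\phi\colon V(G)\to V(H)$ be an arbitrary function. For any edge $xy\in E(G)$, the pair $\phi(x)\phi(y)$ lies in $E(H)$ by the fact just established (covering also the case $\phi(x)=\phi(y)$, which is why the loops matter). So every function is a homomorphism, and conversely every homomorphism is in particular a function, so $\Hom(G,H)=\{\phi\colon V(G)\to V(H)\}$. Since $\abs{V(G)}=n$ and $\abs{V(H)}=p$, we conclude $\hom(G,H)=p^n$.

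There is no real obstacle here; the only point requiring any care is noting that the loop on every vertex of $H$ is essential, since it is what allows edges of $G$ to be mapped to a single vertex of $H$. If one only knew $H$ were the complete graph $K_p$ without loops, the count would instead be the number of proper $p$-colorings of $G$, which is not $p^n$ in general. So in writing this up I would make the presence of loops explicit in the displayed structural statement about $E(H)$.
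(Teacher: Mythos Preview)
Your argument is correct and is exactly the intended one: with code $1^p$ the image $H$ is the fully looped complete graph on $p$ vertices, so every function $V(G)\to V(H)$ is a homomorphism and $\hom(G,H)=p^n$. The paper in fact states this proposition without proof, treating it as an immediate observation, so there is nothing further to compare.
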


There is one more simple case, namely when $H$ is a loop-threshold graph with code of the form $0^q 1^p$.

\begin{proposition}\label{prop:pm}
	If $p$ and $q$ are non-negative integers with $p\geq 1$ and $H$ is a loop-threshold graph with code $0^q 1^p$ and $G$ is a graph on $n$ vertices with $c$ isolated vertices, then $\hom(G,H)=(p+q)^c p^{n-c}$.
\end{proposition}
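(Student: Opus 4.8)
The plan is to analyze the structure of a homomorphism $\phi\colon G\to H$ where $H$ has code $0^q1^p$, and to show that $\phi$ is freely determined at each vertex of $G$ subject only to a constraint governed by isolated vertices. First I would unpack the structure of $H$: writing the code $0^q1^p$ from right to left, the vertices $v_1,\dots,v_p$ (the ones) are all looped and pairwise adjacent, and each is adjacent to every vertex; the vertices $u_1,\dots,u_q$ (the zeros) are unlooped, pairwise nonadjacent, and adjacent only to $v_1,\dots,v_p$. So $V(H)$ has $p+q$ vertices, the ``one'' vertices form a looped clique that dominates everything, and the ``zero'' vertices are an independent set each of whose only neighbors are the one-vertices. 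In particular, any edge of $H$ has at least one endpoint among $v_1,\dots,v_p$, and every vertex of $H$ is adjacent to every $v_i$.

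Next I would verify the count by a direct bijective/counting argument. Let $G$ have $n$ vertices, of which $c$ are isolated. An isolated vertex of $G$ imposes no edge constraints, so it may be sent to any of the $p+q$ vertices of $H$: that contributes a factor of $(p+q)^c$. For the remaining $n-c$ vertices, I claim each must be sent to one of the $p$ looped one-vertices, and any such assignment is a valid homomorphism. To see the restriction: if $x$ is a non-isolated vertex with neighbor $y$ in $G$, and $\phi(x)=u_j$ is a zero-vertex, then since $u_j$'s only neighbors in $H$ are the $v_i$, we would need $\phi(y)=v_i$ for some $i$ — that is fine for $y$, but it forces $\phi(x)$ away from being a zero only if... wait, more carefully: the issue is whether $\phi$ restricted to the union of nontrivial components can use zero-vertices at all. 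Actually the cleanest statement is: on each connected component $C$ of $G$ with at least one edge, $\phi$ cannot map any vertex of $C$ to a zero-vertex. Indeed, suppose $\phi(x)=u_j$ for some $x\in C$; pick a neighbor $y$ of $x$ in $C$, forcing $\phi(y)\in\{v_1,\dots,v_p\}$. Now walk along a path in $C$; each step alternates no constraint — hmm, a $v_i$-vertex is adjacent to everything, so from $\phi(y)=v_i$ the next vertex is unconstrained. This means the naive argument does not immediately forbid zero-vertices on nontrivial components. I would instead argue directly: a vertex $x$ in a nontrivial component can be mapped to $u_j$ only if all of its neighbors map into $\{v_1,\dots,v_p\}$, which is allowed. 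So zero-vertices are NOT forbidden on nontrivial components, and the claimed formula $(p+q)^cp^{n-c}$ would be wrong unless... I must re-examine.

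Given this tension, the key step — and the main obstacle — is to correctly identify which vertices are forced into $\{v_1,\dots,v_p\}$. The resolution I expect is the following: a vertex $x$ of $G$ can be mapped to a zero-vertex $u_j$ if and only if $x$ is isolated. For if $x$ has a neighbor $y$, then $\phi(y)$ must be a neighbor of $u_j$, hence $\phi(y)=v_i$ for some $i$; but then consider that $v_i$ is a neighbor of $x$'s image $u_j$, fine — the constraint propagates only to $y$, not back to $x$. Hmm, so a leaf vertex $x$ adjacent to a single vertex $y$ really can be sent to $u_j$ with $y\mapsto v_i$. That seems to contradict the proposition as I've read it. Therefore I suspect the intended reading is subtler, OR the key insight is a clever reformulation: note $\hom(G,H)$ where $H$ has code $0^q1^p$ decomposes as $\hom(G,H) = \hom(G, K_p^{\mathrm{loop}}) \cdot$ (contributions from zeros), and one shows that extending by a zero-vertex in the code multiplies the homomorphism count by a controlled factor only on isolated vertices because a zero-vertex is a ``clone'' of the empty-graph behavior attached to the dominating clique. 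The clean way to finish: proceed by induction on $q$. The base case $q=0$ is the earlier proposition giving $p^n$. For the inductive step, adding one more zero-vertex $u_{q+1}$ to $H$: a homomorphism into the new graph either never uses $u_{q+1}$ (counted by induction) or uses it, and the set $\phi^{-1}(u_{q+1})$ must be an independent set all of whose outside-neighbors map to $\{v_1,\dots,v_p\}$ — and carrying out this count carefully, summing over which isolated vertices land on $u_{q+1}$ versus elsewhere, should telescope to $(p+q+1)^cp^{n-c}$. I would present the induction cleanly, and flag the verification that non-isolated vertices contribute exactly the factor $p$ each — equivalently, that mapping any non-isolated vertex to a zero-vertex, while locally legal at that vertex, is globally incompatible with the edge constraints when aggregated — as the step requiring the most care.
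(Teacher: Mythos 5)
Your proof goes wrong at the very first step, in unpacking the structure of $H$, and this error is the source of all the confusion that follows. The code $0^q1^p$ is read from right to left: the $p$ one-vertices are added first (forming a looped dominating clique $K_p^{\circ}$), and the $q$ zero-vertices are added \emph{afterwards} as unlooped isolated vertices. Since the paper's convention is that a zero-vertex is adjacent only to ones appearing to its \emph{left} in the code, and there are no ones to the left of these zeros, the zero-vertices of $H$ have no neighbors at all: $H=K_p^{\circ}\cup E_q$. You instead described the graph with code $1^p0^q$, namely the looped split graph $K_p^{\circ}\vee E_q$ in which every zero-vertex is adjacent to every one-vertex; that is a different image graph, which the paper handles separately via weighted independent sets. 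Under the correct reading the proposition is immediate, and it is essentially the paper's one-line proof: a non-isolated vertex $x$ of $G$ has a neighbor $y$, and $\phi(x)\phi(y)$ must be an edge of $H$, which is impossible if $\phi(x)$ is one of the $q$ isolated vertices of $H$; hence non-isolates of $G$ must land in the looped clique (giving a factor $p$ each, with no further constraint since the clique is complete and fully looped), while isolates of $G$ may land anywhere (giving a factor $p+q$ each).

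To your credit, you correctly detected that under your reading of $H$ the claimed formula would be false --- a leaf of $G$ genuinely can map to a zero-vertex of $K_p^{\circ}\vee E_q$ --- but rather than revisiting the definition of the code, you left the contradiction unresolved and gestured at an induction on $q$ that you did not carry out and that cannot succeed as stated, since the statement you were trying to prove is false for the graph you were working with. The step you flagged as ``requiring the most care'' (that mapping a non-isolated vertex to a zero-vertex is globally incompatible with the edge constraints) is false for the join and trivially true for the disjoint union. The fix is not a cleverer counting argument but simply the correct identification of $H$.
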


\begin{proof}
	Any of the $c$ isolates in $G$ can be mapped to any vertex of $H$, while the other $n-c$ non-isolates in $G$ can only be mapped to any of the $p$ non-isolates in $H$.
\end{proof}

If $H$ is a loop-threshold graph with code $0^q 1^p$, then Proposition~\ref{prop:pm} implies that $\hom(G,H)$ is a simple function of the number of isolates when $G\in \G(n,m)$.  To maximize $\hom(G,H)$ over $G\in \G(n,m)$, one simply has to maximize the number of isolates in $G$.  This can be done by a variety of graphs, including the colex\footnote{The \emph{colexicographic order} on subsets of $[n]$ sets $A<B$ if $\max(A\symd B)\in B$.  The \emph{colex graph with $n$ vertices and $m$ edges}, denoted $C(n,m)$, has vertex set $[n]$ and edge set consisting of the initial $m$ elements of $\binom{[n]}2$ in the colexicographic order.} graph $C(n,m)$.

The final general case we would like to present in this section is that of image loop-threshold graphs with code $1^p 0^q$.  We note that if $H$ is a loop-threshold graph with code $1^p 0^q$ and we consider a homomorphism $\phi\in \Hom(G,H)$ for some graph $G$, then the vertices of $G$ mapped to the vertices with code $0$ in $H$ form an independent set in $G$.  Thus, as one might expect, we can deduce the extremal question for maximizing homomorphisms into $H$ from earlier results about independent sets.

As one does often in statistical physics, we can weight homomorphisms to a graph $H$ with respect to a weighting function $\beta:V(H)\to [0,\infty)$.  Then, we ``count'' weighted homomorphisms from a graph $G$ to $H$ via the partition function:
\[
	\hom_{\beta}(G,H)=\sum_{\phi\in \Hom(G,H)} \prod_{v\in G} \beta\left(\phi(v)\right).
\]
Note that this reduces to counting homomorphisms if $\beta\equiv 1$, i.e., $\hom_{\beta}(G,H)=\hom(G,H)$.  We are interested in the case of weighted independent sets and so, if we label the vertices of $\hind$ as in Figure~\ref{fig:wtdhi}, we define the weight function $\beta^{\lambda}$ to be the weight function on $\hind$ defined by
\[
	\beta^{\lambda}(x)=\begin{cases}
		\lambda & \text{if $x=a$}\\
		1 & \text{if $x=b$}.
	\end{cases}
\]
\begin{figure}
	\includegraphics{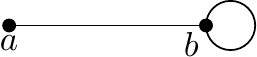}
	\caption{A labeled $\hind$}\label{fig:wtdhi}
\end{figure}
We see that, with this weighting of $\hind$, an independent set $I$ is assigned weight $\lambda^{\abs{I}}$.  But then the weighted homomorphism model introduced above yields the independence polynomial of the graph, i.e., $P_G(\lambda)=\hom_{\beta^{\lambda}}(G,H)$.  Since $P_G(\lambda)$ depends only on the number of independent sets of various sizes and the lex graph maximizes the number of independent sets of all sizes simultaneously, by Theorem~\ref{thm:levelind}, we have the following.

\begin{corollary}\label{cor:wtdind}
	If $G$ is a graph on $n$ vertices and $m$ edges and $\lambda>0$, then 
	\[
		P_G(\lambda)\leq P_{L(n,m)}(\lambda).
	\]
\end{corollary}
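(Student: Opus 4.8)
The plan is to rewrite the weighted count $P_G(\lambda)=\hom_{\beta^{\lambda}}(G,\hind)$ as an explicit polynomial in $\lambda$ whose coefficients are the level counts $i_t(G)$, and then apply Theorem~\ref{thm:levelind} term by term. First I would use the correspondence already established above: a homomorphism $\phi\in\Hom(G,\hind)$ is determined by the set $I=\phi^{-1}(a)$ of vertices sent to the unlooped (weight-$\lambda$) vertex $a$, and as $\phi$ ranges over $\Hom(G,\hind)$ the set $I$ ranges over exactly the independent sets of $G$; the remaining vertices are sent to $b$ and contribute weight $1$ each, so $\prod_{v\in G}\beta^{\lambda}(\phi(v))=\lambda^{\abs{I}}$. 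Summing over all homomorphisms gives
\[
	P_G(\lambda)=\sum_{I\text{ independent in }G}\lambda^{\abs{I}}=\sum_{t=0}^{n}i_t(G)\,\lambda^t,
\]
and likewise $P_{L(n,m)}(\lambda)=\sum_{t=0}^{n}i_t(L(n,m))\,\lambda^t$, since $L(n,m)$ also has $n$ vertices.

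Next, because $\lambda>0$ we have $\lambda^t>0$ for every $t\geq 0$, so the coefficientwise inequalities $i_t(G)\leq i_t(L(n,m))$ supplied by Theorem~\ref{thm:levelind} (which applies since $G,L(n,m)\in\G(n,m)$) may be multiplied by $\lambda^t$ and summed over $t=0,\dots,n$ without any sign issues; this immediately yields $P_G(\lambda)\leq P_{L(n,m)}(\lambda)$.

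There is no real obstacle in this argument; the only points requiring a moment's care are that the hypothesis $G\in\G(n,m)$ is precisely what lets us invoke Theorem~\ref{thm:levelind}, and that positivity of $\lambda$ is what allows a whole family of coefficientwise inequalities to be combined into a single polynomial inequality. If one also wanted the equality case, it would follow from the equality characterization in Theorem~\ref{thm:levelind} applied at any level $t$ where $G\not\cong L(n,m)$ already forces strict inequality (e.g.\ $t=2$), but the statement as given does not require this.
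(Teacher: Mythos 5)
Your proof is correct and is essentially the paper's own argument: the paper likewise observes that $P_G(\lambda)=\sum_t i_t(G)\lambda^t$ depends only on the level counts of independent sets and then invokes Theorem~\ref{thm:levelind} together with the positivity of $\lambda$. Your write-up just makes the coefficientwise comparison explicit.
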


Finally, we can return to our original question regarding image loop-threshold graphs with codes of the form $1^p 0^q$.  We let $S^{\circ}(p,q)$ be the clique-looped split graph $K_p^{\circ}\vee E_q$, in which each vertex of $K_p$ is looped.\footnote{The \emph{join} of graphs $G$ and $H$, denoted $G\vee H$, has vertex set $V(G)\cup V(H)$ and edge set $E(G)\cup E(H)\cup \setof{xy}{x\in V(G), y\in V(H)}$.}  In other words, $S^{\circ}(p,q)$ is a loop-threshold graph with code $1^p 0^q$.  We have the following.

\begin{corollary}
	Let $p,q\geq 1$ and $G$ be a graph with $n$ vertices and $m$ edges.  Then 
\[
	\hom(G,S^{\circ}(p,q))\leq \hom(L(n,m),S^{\circ}(p,q)).
\]
\end{corollary}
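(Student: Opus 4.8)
The plan is to express $\hom(G,S^{\circ}(p,q))$ explicitly in terms of the numbers $i_t(G)$ of independent sets of size $t$ in $G$, and then to reduce the inequality to Corollary~\ref{cor:wtdind} (equivalently, Theorem~\ref{thm:levelind}).

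First I would analyze the structure of an arbitrary homomorphism $\phi\in\Hom(G,S^{\circ}(p,q))$. Label the vertices of $S^{\circ}(p,q)$ so that $v_1,\dots,v_p$ are the looped vertices forming $K_p^{\circ}$ and $u_1,\dots,u_q$ are the unlooped vertices of $E_q$, and set $A=\phi^{-1}(\set{u_1,\dots,u_q})$. Since the $u_j$ are pairwise nonadjacent and carry no loops, $A$ must be independent in $G$. Conversely, because every $v_i$ is looped and is adjacent (via the join) to every other vertex of $S^{\circ}(p,q)$, there is no further constraint: any independent set $A\of V(G)$, together with an arbitrary function $A\to\set{u_1,\dots,u_q}$ and an arbitrary function $V(G)\wo A\to\set{v_1,\dots,v_p}$, assembles into a homomorphism, and every homomorphism arises in exactly one such way. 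Hence
\[
	\hom(G,S^{\circ}(p,q))=\sum_{\substack{A\of V(G)\\ A\text{ independent}}}q^{\abs{A}}p^{n-\abs{A}}=\sum_{t=0}^{n}i_t(G)\,q^{t}p^{n-t}.
\]

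The second step is to recognize the right-hand side as a value of the independence polynomial. Factoring out $p^{n}$ gives
\[
	\hom(G,S^{\circ}(p,q))=p^{n}\sum_{t=0}^{n}i_t(G)\left(\frac{q}{p}\right)^{t}=p^{n}\,P_G\!\left(\frac{q}{p}\right),
\]
and the identical computation applies with $G$ replaced by $L(n,m)$. Since $p,q\geq 1$, the value $\lambda:=q/p$ is positive, so Corollary~\ref{cor:wtdind} gives $P_G(\lambda)\leq P_{L(n,m)}(\lambda)$; multiplying through by $p^{n}>0$ yields the claimed inequality. (If one also wants the characterization of equality, it comes for free from the equality statement in Theorem~\ref{thm:levelind}, since the coefficients $q^tp^{n-t}$ are strictly positive.)

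The only point that genuinely needs care — and the step I would write out in full — is that the decomposition of $\phi$ into the independent set $A$ and the two functions is a bijection: there is no omitted edge constraint (impossible here, as each $v_i$ is adjacent to everything and has a loop) and no overcounting (impossible, since $A$ is recovered from $\phi$ as the preimage of the unlooped part). Beyond this routine bookkeeping there is no real obstacle; the substance of the corollary is entirely contained in the already-established fact that $L(n,m)$ simultaneously maximizes $i_t$ for every $t$.
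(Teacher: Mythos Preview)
Your argument is correct and follows essentially the same line as the paper's proof: express $\hom(G,S^{\circ}(p,q))$ as a scalar multiple of the independence polynomial evaluated at a positive parameter, then invoke Corollary~\ref{cor:wtdind}. You simply spell out the bijection in more detail than the paper does. One small remark: you take $\lambda=q/p$ and factor out $p^{n}$, whereas the paper writes $\lambda=p/q$ and factors out $q^{n}$; your normalization is the one consistent with the paper's convention that $P_G(\lambda)=\sum_t i_t(G)\lambda^{t}$, so you may want to flag that discrepancy.
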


\begin{proof}
	Set $\lambda=p/q$.  Then, we have $\hom(G,S^{\circ}(p,q))=q^n P_G(\lambda)$ and so the result follows from Corollary~\ref{cor:wtdind}.
\end{proof}

Armed with the above results, we present in Table~\ref{tab:smallthres} all loop-threshold graphs on three or less vertices.  All cases except for the last two are covered by one of the preceding results in this section.  The penultimate image graph $F$ (with code $101$) is known as ``the fox'' (or ``the wrench'' by Brightwell and Winkler \cite{BW99}) and the extremal question for this graph was studied by Cutler and Radcliffe \cite{CR12}.  They were able to determine a class of five threshold graphs which formed a minimal class of $F$-extremal graphs.

The final image graph (with code $010$), which we denote $J$, will be the main focus of this paper.  While it is relatively easy to determine the form of all $J$-extremal graphs, we will determine the size of the non-trivial component asymptotically up to a constant factor.

\begin{center}
\begin{table}[t]
\begin{tabular}{>{\centering\arraybackslash}c|>{\centering\arraybackslash}c|>{\centering\arraybackslash}c}\renewcommand{\arraystretch}{1.8}
	Code of $H$ & $H$ & $H$-extremal graph\\ \hline\hline
	$0$ & \includegraphics{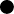} & Any\\
	$1$ & \includegraphics{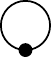} & Any \\ \hline
	$00$ & \includegraphics{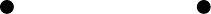} & Any \\
	$11$ & \includegraphics{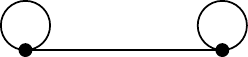} & Any \\  \hline
	$01$ & \includegraphics{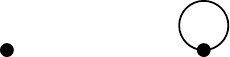} & $C(n,m)$\\ \hline
	$10$ & \includegraphics{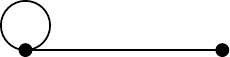} & $L(n,m)$\\ \hline
	$000$ & \includegraphics{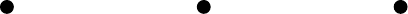} & Any \\
	$111$ & \includegraphics{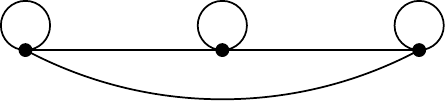} & Any \\ \hline
	$001$ & \includegraphics{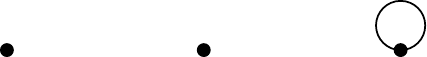} & $C(n,m)$ \\
	$011$ & \includegraphics{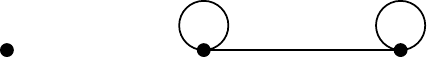} & $C(n,m)$ \\ \hline
	$100$ & \includegraphics{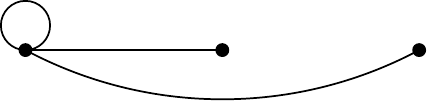} & $L(n,m)$ \\
	$110$ & \includegraphics{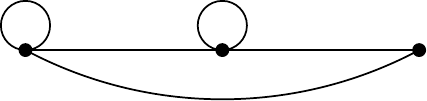} & $L(n,m)$\\ \hline
	$101$ & \includegraphics{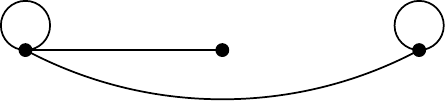} & See \cite{CR12}\\ \hline
	$010$ & \includegraphics{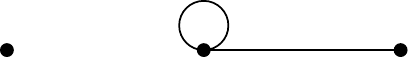} & See Section~\ref{sec:jhoms}
\end{tabular}
\caption{$H$-extremal graphs for all loop-threshold graphs $H$ on at most three vertices}\label{tab:smallthres}
\end{table}
\end{center}


\section{Homomorphisms into $\J$} 
\label{sec:jhoms}

As mentioned in the previous section, the only case remaining in finding a minimal set of $H$-extremal graphs, where $H$ is a loop-threshold graph on at most three vertices, is that of $H=\J$.  (See Figure~\ref{fig:j}.)  The code of $\J$ is $010$.
\begin{center}
	\begin{figure}[h]
		\includegraphics{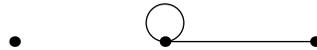}
		\caption{The loop-threshold graph $\J$.}\label{fig:j}
	\end{figure}
\end{center}
Given the fact that $J=E_1\cup \hind$, it is not surprising that the $J$-extremal graphs are comprised of a lex component with some number of added isolated vertices.  For a graph $G$, we let $j(G)=\hom(G,J)$.

\begin{theorem}\label{ext_composition}
Every $\J$-extremal threshold graph is the union of a lex graph and an empty graph.
\end{theorem}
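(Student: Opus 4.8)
The plan is to start from a $\J$-extremal threshold graph $G$ and analyze its code. Since $\J = E_1 \cup \hind$, a homomorphism $\phi \in \Hom(G,\J)$ is determined by choosing which vertices of $G$ map to the isolated looped vertex of $\J$ (forming any subset, call it $A$) and mapping the remaining vertices into $\hind$; the latter requires that the vertices sent to the unlooped vertex of $\hind$ form an independent set in $G[V(G)\setminus A]$. Hence $j(G) = \sum_{A \subseteq V(G)} i(G - A)$, where the sum is over all subsets $A$. Equivalently, $j(G) = \sum_{A} i(G-A)$ ranges over induced subgraphs, and by Theorem~\ref{thm:thrtothr} we may assume $G$ is threshold, so its structure is governed by a code in $\{0,1\}^n$.

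First I would observe that the code of a threshold extremal graph must begin (reading from the right, i.e., the high-index end) with a block of zeros, since trailing isolated vertices contribute a factor that is at least as good as any other vertex type — more precisely, I would show that if the code contains a $0$ appearing to the \emph{left} of (i.e., earlier than) a $1$, one can swap them to get another threshold graph with at least as many $\J$-homomorphisms. This is the "compression" or "exchange" step: I want to argue that moving a $0$ rightward past a $1$ in the code does not decrease $j$. This should follow by writing $j(G)$ as a sum over the two choices for the swapped pair and using that the swap within the nontrivial component corresponds to a Kruskal–Katona-type compression, which by Theorem~\ref{thm:levelind} (or Corollary~\ref{cor:wtdind}) cannot decrease the independence polynomial of the relevant subgraphs; the isolated-vertex contributions are symmetric. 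Once all $0$'s have been pushed to the right of all $1$'s, the code has the form $0^q \, 1^p$ restricted to the nontrivial part — but we also need the nontrivial component itself to be lex-ordered.

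Second, having established that an extremal threshold graph is a lex graph on its nontrivial component together with $c$ isolated vertices, the remaining content is that no "mixed" threshold code (one with $1$'s interspersed among the $0$'s in a way not realizing a lex graph) can do better. Here I would invoke that among threshold graphs on $n' = n - c$ vertices with $m$ edges, the lex graph $L(n',m)$ simultaneously maximizes $i_t$ for every $t$ (Theorem~\ref{thm:levelind}), hence maximizes $\sum_{A \subseteq V} i(G-A)$ term-by-term after a suitable coupling of the subsets $A$ — the subtlety being that $G - A$ depends on $G$, so one cannot literally compare term by term for a fixed $A$. To handle this I would either (a) induct on the number of vertices, peeling off the rightmost code symbol: if it is a $0$, $G = G' \cup E_1$ and $j(G) = 2\,j(G')$, so extremality reduces to extremality of $G'$; if it is a $1$, $G$ has a dominating vertex $v$ and $j(G) = j(G-v) + (\text{homs fixing } \phi(v))$, where fixing $\phi(v)$ to be the looped isolated vertex of $\J$ again gives $j(G-v)$, and fixing it to a vertex of $\hind$ forces all neighbors of $v$ (i.e. all other vertices) into the looped part, contributing a controlled amount — this recursion should let me compare codes symbol by symbol; or (b) directly compare $j$ of two threshold graphs differing by one elementary code move.

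The main obstacle I anticipate is the exchange step in the case where the swap is internal to the nontrivial component: one must show cleanly that the local code move $\dots 0 1 \dots \to \dots 1 0 \dots$ (swapping a zero earlier than a one) does not decrease $\sum_{A} i(G-A)$, and that the within-component move is exactly the kind of shift covered by Kruskal–Katona / Theorem~\ref{thm:levelind}. The bookkeeping of how deleting an arbitrary subset $A$ interacts with a code swap is where the real work lies; I expect the clean way through is the vertex-peeling induction of option (a), since there $j(G) = 2 j(G')$ or $j(G) = j(G-v) + (\text{something monotone in the code of } G-v)$ makes the induction transparent, and the base cases (codes $0^q$, $1^p$, $0^q1^p$) are already handled by Propositions~\ref{prop:pm} and its siblings. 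Uniqueness of the form (as opposed to exact sizes $p,q$) then follows because every strict code move either strictly increases $j$ or produces another graph of the asserted form.
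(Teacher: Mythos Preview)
Your argument rests on a misreading of the image graph $J$. You write that a homomorphism is determined by choosing an arbitrary subset $A$ of vertices to send to ``the isolated looped vertex of $J$'', and then mapping the rest into $\hind$, yielding $j(G) = \sum_{A \subseteq V(G)} i(G-A)$. But the isolated vertex of $J$ is \emph{unlooped}: the code of $J$ is $010$, so the last vertex added is an unlooped isolate, and $J = \hind \cup E_1$ with $E_1$ loopless. Consequently, if $v \in G$ has any neighbor $u$ and $\phi(v)$ is the isolated vertex of $J$, then $\phi(u)$ would have to be adjacent to that isolate in $J$; there is no such vertex, so $v$ must itself be isolated in $G$. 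The preimage of the isolate of $J$ is therefore not an arbitrary subset of $V(G)$ but a subset of the isolated vertices of $G$. Your formula for $j(G)$, and hence the exchange/compression program and the vertex-peeling induction built on it, does not get off the ground. (Even had the isolate been looped, an arbitrary $A$ would still fail: edges from $A$ to $V(G)\setminus A$ would have nowhere to go, so $A$ would have to be a union of components, not an arbitrary subset.)

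Once the structure of $J$ is read correctly, the proof is much shorter than the route you sketch, and this is what the paper does. A threshold graph $T$ with $m \ge 1$ edges has a unique nontrivial component $C$ on, say, $q$ vertices; by the observation above every vertex of $C$ must land in the $\hind$ part of $J$, while each of the $n-q$ isolates of $T$ may map to any of the three vertices of $J$. Hence
\[
j(T) = 3^{\,n-q}\, i(C).
\]
The only remaining freedom is the isomorphism type of $C$, and Theorem~\ref{thm:ind} gives $i(C) \le i(L(q,m))$ with equality iff $C \cong L(q,m)$. No code swaps, no induction on code symbols, and no summation over deleted subsets are needed; the multiplicative splitting $j(T) = 3^{\,n-q}\, i(C)$ does all the work.
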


\begin{proof} 
	The result is trivial if $m=0$.  If $m>0$ and $T$ is a threshold graph on $n$ vertices and $m$ edges then it contains a single nontrivial component, $C$.  Let $q=\abs{V(C)}$.  Since each vertex outside of that component can map to any vertex of $\J$ under a homomorphism in $\Hom(T,J)$, and every vertex contained in that component must map to the $\hind$ subgraph of $\J$, the number of homomorphisms from $T$ to $J$ is 
\[
\j (T)=3^{n-q}\,i(C),
\]
where $i(C)$ is the number of independent sets in $C$.  
As Theorem~\ref{thm:ind} establishes that $i(C)\leq i(L(q,m))$, with equality if and only if $C\cong \L(q,m)$, we have that
\begin{align*}
\j(T)&= 3^{n-q}\,i(C)\\
&\leq 3^{n-q}\,i(\L(q,m))\\
&=j(E_{n-q}\cup L(q,m)),
\end{align*}
where equality holds in the second step if and only if $T\cong E_{n-q}\cup L(q,m)$.
\end{proof}

As this construction of lex graphs with added isolated vertices occurs so frequently it is natural to define it as a function of the total number of vertices, the number of vertices in the lex component, and the number of edges.

\begin{definition}
For $n,q,m\in\N $ where $q\leq \min\left\{ n, m+1\right\}$ and $0\leq m\leq {q \choose 2}$, let $\S{n}{q}{m}$ be the graph on $n$ vertices and $m$ edges defined by the union 
\[
\S{n}{q}{m}=\lex{q}{m}\cup E_{n-q}.
\]

\end{definition}

The restriction $q\leq m+1$ assures that $\lex{q}{m}$ contained no isolated vertices.  We also note that
\begin{equation}\label{eqn:jnumber} 
\bigj{\S{n}{q}{m}}=3^{n-q} i \left( \lex{q}{m} \right).
\end{equation}


\section{An extremal question} 
\label{sec:lm}

In light of Theorem \ref{ext_composition}, finding a $\J$-extremal threshold graph on $n$ vertices and $m$ edges is thus reducible to finding $q$ within the bounds of $m\leq {q \choose 2}$ and $q\leq m+1$ such that $\S{n}{q}{m}$ is $\J$-extremal.  That is to say that the question of determining $\J$-extremality is  solely a matter of finding an appropriate number of vertices on which to form a lex component while leaving the rest as isolated vertices.  This is the problem with which the remainder of this paper will be concerned.  

Ideally, one would be able to find the size of the lex component in the $\J$-extremal graph for any $n$ and $m$.  In general however, this does not appear to be an amenable question.  For example, if one fixes $n$ and asks how the size of the lex component changes as $m$ increases, one might hope that this change is at least monotonic even if not easy to precisely quantify.  This, unfortunately, is not the case.  Figure~\ref{fig:comptwo} illustrates this when $n=50$ which appears to typify the behavior throughout all values of $n$ based on computer testing. 

\begin{center}
	\begin{figure}[h] 
		\includegraphics[width=6in]{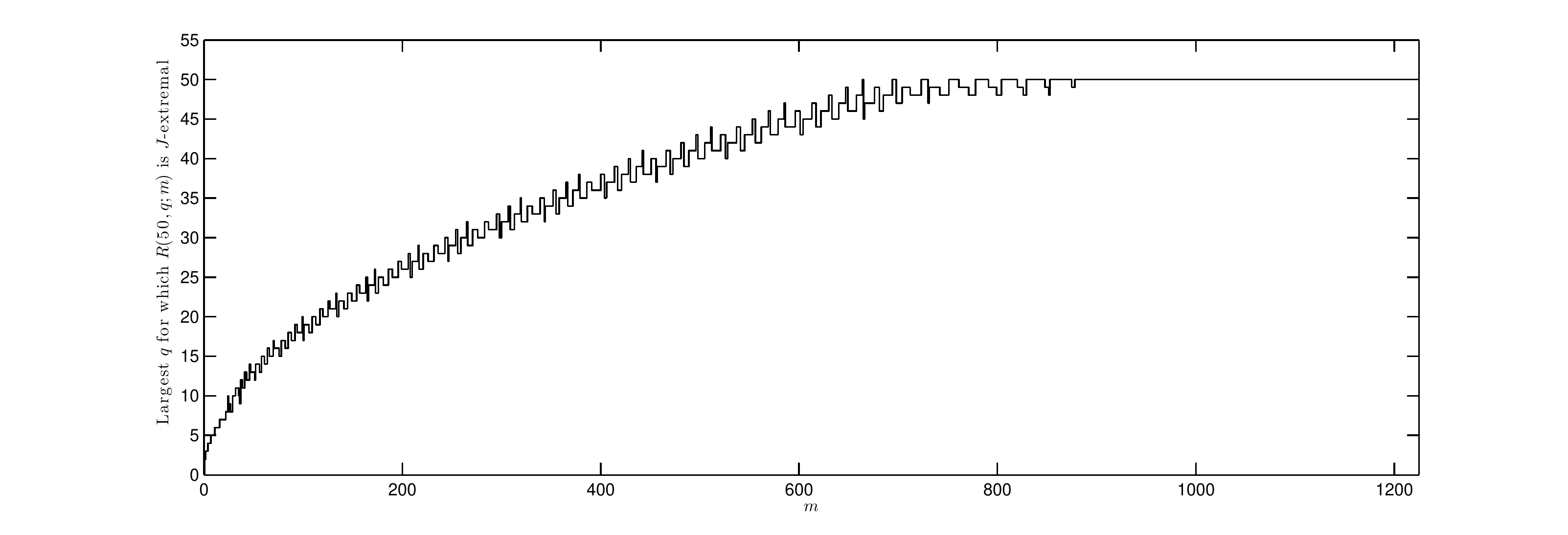}
		\caption{Computer testing showing non-monotone behavior of the lex component size in $J$-extremal graphs in the case where $n=50$. }\label{fig:comptwo}
	\end{figure}
\end{center}

Conversely, if one fixes the number of edges, $m$, the size of the lex component should remain fixed (at least in the limit) as $n$ increases to accord with equation (\ref{eqn:jnumber}) above since the size of the lex component cannot exceed $m+1$ vertices. In light of this non-monotonicity in terms of increasing values of $m$ and in order to capitalize on the relative stability of the problem in terms of increasing values of $n$, we instead consider the size of the lex component when $m$ is fixed. We define a function related to this behavior as follows.

\begin{definition}
For $m\in\N$, let $\l(m)$ be the maximum number of vertices in the non-trivial component of a $\J$-extremal threshold graph on $m+1$ vertices and $m$ edges. Equivalently,
\[
\l(m)=\max\setof{q}{\S{m+1}{q}{m}\text{ is }\j\text{-extremal}}
\]
\end{definition}

Here any uniqueness issues involving the extremal graphs are eliminated by the use of the maximum in our definition.  The choice of defining the composition of the extremal graph on exactly $m+1$ vertices relative to $m$ edges is not arbitrary; it corresponds to the largest possible size of the lex component in an extremal graph.  The following theorem proves the stronger claim that there is a single size of the lex component in a $J$-extremal graph on $m$ edges once $n$ is large enough.

\begin{theorem}
The graph $\S{n}{\l(m)}{m}$ on $n$ vertices and $m$ edges is $\J$-extremal for $n\geq\l (m)$.
\end{theorem}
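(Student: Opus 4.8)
The plan is to show that $\l(m)$, defined as the largest lex-component size that is $\J$-extremal among graphs on exactly $m+1$ vertices, remains optimal for all larger $n$. The key observation is equation~\eqref{eqn:jnumber}: for any admissible $q$ we have $\bigj{\S{n}{q}{m}} = 3^{n-q}\,i(\lex{q}{m})$. Fixing $n$ and $m$, comparing two candidate sizes $q_1 < q_2$ amounts to comparing $3^{-q_1}i(\lex{q_1}{m})$ with $3^{-q_2}i(\lex{q_2}{m})$, and this comparison \emph{does not depend on $n$ at all}. So define, for each admissible $q$ (i.e. $m \le \binom q2$ and $q \le m+1$), the quantity $f(q) = 3^{-q}\,i(\lex{q}{m})$; then $\S{n}{q}{m}$ is $\J$-extremal among threshold graphs on $n$ vertices and $m$ edges if and only if $q$ maximizes $f$ over all admissible values of $q$ that additionally satisfy $q \le n$. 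By Theorem~\ref{ext_composition}, it suffices to compare with threshold graphs, so this characterizes $\J$-extremality outright.

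First I would record the $n$-independence just stated: the set of $q$ maximizing $f$ over $\{q : m \le \binom q2,\ q \le m+1\}$ is a fixed finite set $Q^\star \subseteq \{1,\dots,m+1\}$ depending only on $m$. Next I would identify $\l(m)$ with $\max Q^\star$: when $n = m+1$ the constraint $q \le n$ is vacuous (since $q \le m+1$ always), so the $\J$-extremal threshold graphs on $m+1$ vertices are exactly the $\S{m+1}{q}{m}$ with $q \in Q^\star$, and by definition $\l(m)$ is the largest such $q$, i.e. $\l(m) = \max Q^\star$. Finally, for $n \ge \l(m)$, the constraint $q \le n$ permits $q = \l(m)$, and since $\l(m) \in Q^\star$ maximizes $f$ over all admissible $q$ (a fortiori over those with $q \le n$), the graph $\S{n}{\l(m)}{m}$ attains the maximum of $\bigj{\cdot}$ over all threshold graphs in $\G(n,m)$. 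By Theorem~\ref{thm:thrtothr} (or directly Theorem~\ref{ext_composition}), some threshold graph is $\J$-extremal, so $\S{n}{\l(m)}{m}$ is $\J$-extremal.

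There is essentially no hard step here — the content is entirely the factorization in~\eqref{eqn:jnumber} and the elementary remark that scaling all the quantities $i(\lex{q}{m})$ by a common factor $3^n$ does not change which one is largest. The only mild subtlety is bookkeeping about the admissibility constraints $m \le \binom q2$ and $q \le m+1$ versus the vertex-count constraint $q \le n$, and making sure that for $n \ge \l(m)$ the former (which never involve $n$) already single out a maximizer that survives the latter. A proof could be written in a few lines: let $q^\star$ be any maximizer of $f$; then for $n \ge q^\star$, $\bigj{\S{n}{q}{m}} = 3^n f(q) \le 3^n f(q^\star) = \bigj{\S{n}{q^\star}{m}}$ for every admissible $q \le n$, and combined with Theorem~\ref{ext_composition} this gives extremality; taking $q^\star = \l(m)$ (legitimate since $\l(m) = \max Q^\star$ is itself a maximizer) and noting $\l(m) \le m+1 \le n$ when $n \ge \l(m)$ finishes the argument.
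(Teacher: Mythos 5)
Your proof is correct and takes essentially the same route as the paper's: both arguments rest on the factorization $\bigj{\S{n}{q}{m}}=3^{n-q}\,i(\lex{q}{m})$ from~\eqref{eqn:jnumber}, which shows that the optimal lex-component size is independent of $n$, combined with Theorems~\ref{thm:thrtothr} and~\ref{ext_composition} to reduce the competition to graphs of the form $\S{n}{q}{m}$. (One harmless slip in your closing sentence: ``$\l(m)\leq m+1\leq n$'' is false whenever $\l(m)\leq n<m+1$, but the only fact actually needed there is $\l(m)\leq n$, which is the hypothesis.)
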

\begin{proof}
Since we have required $n\geq \l(m)$ we may always add or remove $c=m+1-n$ isolated vertices to the graph $\S{n}{\l(m)}{m}$ to produce the graph $\S{m+1}{\l(m)}{m}$, in accordance with the sign of $c$. A simple counting argument on these isolates yields 
\[
\bigj{\S{n}{\l(m)}{m}}=3^{-c}\bigj{\S{m+1}{\l(m)}{m}},
\]
whereby the $J$-extremality of $\S{n}{\l(m)}{m}$ is now seen to be a consequence of the $\J$-extremality of $\S{m+1}{\l(m)}{m}$ which holds by the definition of $\l(m)$.

%
%
\end{proof}

Having established the significance of $\l(m)$ in the construction of $\J$-extremal graphs, we will concentrate on establishing an upper bound on $\l(m)$.  As the number of homomorphisms to $\J$ is so closely related to the number of independent sets in the lex component, the matter of bounding $\l(m)$ from above can be approached only once reasonable bounds are found for $i(\L(n,m))$.  Before embarking on these calculations, we pause to investigate some of the properties of the lex graph.  The lex graph $L(n,m)$ is related to (and sometimes isomorphic to) the \emph{split graph}.  We define the \emph{split graph $S(n,k)$} is the graph with $n$ vertices defined as the join of $K_k$ and $E_{n-k}$.  Thus, the number of edges in $S(n,k)$ is 
\[
	e(S(n,k))=\binom{k}2+k(n-k).
\]
If in the split graph $S(n,k)=K_k\vee E_{n-k}$, we label the vertices of the $K_k$ with $\set{1,2,\ldots,k}$ and those of the $E_{n-k}$ with $\set{k+1,k+2,\ldots,n}$, then we can use this to describe the structure of the lex graph.  If $m=\binom{k}2+k(n-k)+w$, we note that the lex graph with $n$ vertices and $m$ edges is
\[
	L(n,k,w):=L(n,m)=S(n,k)+\setof{(k+1)x}{k+2\leq x\leq k+w+2}.
\]
Given $n$ and $m$, we would like to determine $k$ and $w$ so that $L(n,m)=L(n,k,w)$.  Throughout this paper, we assume that $0\leq w\leq n-k-2$ so that there is no ambiguity about the value of $k$ and $w$ for a given $n$ and $m$.  Essentially, the lex graph $L(n,k,w)$ consists of the split graph $S(n,k)$ along with a star with $w$ edges inside the $E_{n-k}$.  (See Figure~\ref{fig:lex}.)  We note that in the threshold code of the lex graph, the center of the star of size $w$ in $L(n,k,w)$ corresponds to the lone one inside of the string of initial zeroes.  From this point forward, we refer to the two types of vertices in the lex graph as those from the \emph{complete part} and those from the \emph{empty part} (even though the empty part may contain a star).
	\begin{figure}
		\includegraphics{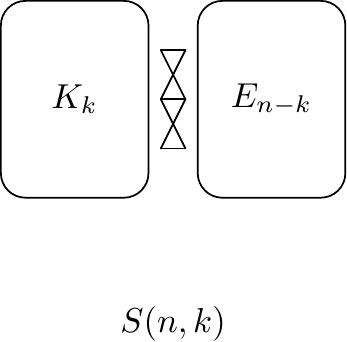}\hspace{.5in}\includegraphics{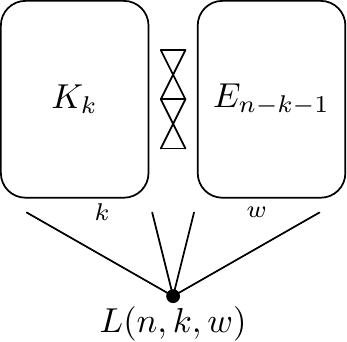}
		\caption{Schematics of the split graph and the lex graph}\label{fig:lex}
	\end{figure}
Given the lex graph $L(n,k,w)$, it is easy to determine the number of independent sets in it.  We state it as the following lemma without proof.
\begin{lemma}\label{lem:lexind}
	For the lex graph $L(n,k,w)$, we have
	\[
		i(L(n,k,w))=2^{n-k-1}+2^{n-k-w-1}+k.
	\]
\end{lemma}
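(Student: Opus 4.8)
The plan is to enumerate the independent sets of $L(n,k,w)$ directly, splitting them according to their intersection with the complete part $\{1,\dots,k\}$. The key structural facts are that the complete part induces a clique $K_k$ and that, since $S(n,k)$ is the join $K_k\vee E_{n-k}$, every vertex of the complete part is adjacent to every vertex of the empty part $\{k+1,\dots,n\}$. Consequently an independent set meets the complete part in at most one vertex, and if it does contain such a vertex then it contains no other vertex whatsoever. So it suffices to count (i) the independent sets disjoint from the complete part and (ii) those that meet it, and then add.

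First I would handle case (i): an independent set disjoint from the complete part is exactly an independent set of the subgraph induced on the empty part. By the description of $L(n,k,w)$, that subgraph is the disjoint union of a star of size $w$ — a $K_{1,w}$, whose center corresponds to the lone $1$ inside the initial string of zeroes in the threshold code — together with $n-k-w-1$ isolated vertices. An independent set of $K_{1,w}$ either omits the center, which allows any of the $2^{w}$ subsets of the leaves, or contains the center and nothing else; hence $K_{1,w}$ has $2^{w}+1$ independent sets. Multiplying by the $2^{\,n-k-w-1}$ choices on the isolated vertices gives $(2^{w}+1)\,2^{\,n-k-w-1}=2^{\,n-k-1}+2^{\,n-k-w-1}$ independent sets of this type.

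Next, for case (ii): for each of the $k$ vertices $j$ in the complete part, the only independent set containing $j$ is the singleton $\{j\}$, since $j$ is adjacent to everything else in the graph. This contributes exactly $k$ further independent sets. Adding the two contributions yields $i(L(n,k,w))=2^{\,n-k-1}+2^{\,n-k-w-1}+k$, as claimed.

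I do not anticipate a real obstacle here; the proof is a short case analysis. The only point requiring a little care is the bookkeeping of the empty part — specifically, verifying under the standing assumption $0\le w\le n-k-2$ that the $K_{1,w}$ (which has $w+1$ vertices) and the remaining $n-k-w-1$ isolated vertices genuinely partition the $n-k$ vertices of the empty part, and checking that the degenerate case $w=0$ (where the ``star'' is a single vertex) is consistent with the stated formula.
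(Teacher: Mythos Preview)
Your proposal is correct and follows essentially the same approach as the paper: both arguments split according to whether the independent set meets the complete part, use that the $k$ complete-part vertices are dominating to get the $k$ singletons, and then count independent sets in the empty part by casing on whether the star center is included. The only cosmetic difference is that you package the empty-part count as $i(K_{1,w})\cdot 2^{\,n-k-w-1}=(2^{w}+1)\,2^{\,n-k-w-1}$, whereas the paper splits directly into the two subcases $2^{\,n-k-1}$ and $2^{\,n-k-w-1}$.
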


\begin{proof}
	Note first that if any of the vertices in the complete part of the lex graph $L(n,k,w)$ are included in an independent set, then no other vertices can be in the independent set since these vertices are dominating.  There are $k$ such independent sets.  If, on the other hand, none of the vertices in the complete side are in an independent set $I$ and the center of the star of size $w$ in the empty part is also not in $I$, then there are $2^{n-k-1}$ such independent sets (including the empty set) since the rest of the vertices form an independent set.  If the center of the star of size $w$ in the empty set is in $I$, then none of the other vertices of the star can be in $I$, and so there are $2^{n-k-w-1}$ such $I$.  These account for all independent sets in $L(n,k,w)$, proving the lemma.
\end{proof}

By extension we may also determine $\j(\S{n}{n^\prime}{m})$ in a general form.  Noting that isolates can be mapped to any vertex of $J$ by an element of $\Hom(\S{n}{n^\prime}{m})$, we have by Lemma~\ref{lem:lexind},
\[
\j(\S{n}{n^\prime}{m})=3^{n-n^\prime}\left[k+2^{n^\prime-k-1}+2^{n^\prime-k-w-1}\right]
\]
where $k=k(n^\prime,m)$ and $w=w(n^\prime,m)$.

Now, we are finally ready to state and prove the main theorem of the paper, which gives an upper bound on $\l(m)$.

\begin{theorem}\label{thm:main}
For any $m\in \N$, we have
\[
\l(m) \leq \frac{5+\sqrt{9+24m}}2.
\]
\end{theorem}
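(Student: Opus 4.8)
\emph{Setup.} The plan is to lean on the reduction already established. By Theorem~\ref{ext_composition} together with~\eqref{eqn:jnumber}, determining $\l(m)$ amounts to understanding, among all $q$ satisfying $m\le\binom q2$, which one maximizes
\[
\bigj{\S{m+1}{q}{m}}=3^{\,m+1-q}\,i(\lex qm),
\]
and then taking the \emph{largest} such $q$. So I would argue by contradiction: if $q=\l(m)$ exceeds $\tfrac12\bigl(5+\sqrt{9+24m}\bigr)$, I would produce a smaller valid value $q'$ with $\bigj{\S{m+1}{q'}{m}}>\bigj{\S{m+1}{q}{m}}$, contradicting $\J$-extremality.

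\emph{Estimating the lex component.} The first step is to bound $i(\lex qm)$ in terms of $q$ and $m$ only. Writing $\lex qm=L(q,k,w)$ and setting $\alpha:=q-k$ for the size of the empty part, Lemma~\ref{lem:lexind} gives
\[
2^{\alpha-1}<i(\lex qm)\le 2^{\alpha}+k\le 2^{\alpha}+q,
\]
while the identity $m=\binom{q-\alpha}{2}+(q-\alpha)\alpha+w$ with $0\le w\le\alpha-2$ forces $\alpha^2-3\alpha+4\le q^2-q-2m$, i.e.\ an explicit bound $\alpha\le\tfrac12\bigl(3+\sqrt{4(q^2-q-2m)-7}\bigr)$; combined with the trivial constraint $\binom q2\ge m$ this squeezes $i(\lex qm)$ between powers of $2$ whose exponents are pinned down by $q$ and $m$.

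\emph{The comparison.} For the second step I would take $q'$ as small as the size constraint allows --- for instance the least $q'$ with $\binom{q'}2\ge m$, so that $\lex{q'}m$ is complete or nearly complete and $i(\lex{q'}m)\ge q'+1$. Since $\S{m+1}{q}{m}$ is $\J$-extremal, $\bigj{\S{m+1}{q}{m}}\ge\bigj{\S{m+1}{q'}{m}}$, and cancelling the common power of $3$ yields
\[
i(\lex qm)\ \ge\ 3^{\,q-q'}\,i(\lex{q'}m)\ \ge\ 3^{\,q-q'}(q'+1).
\]
Inserting the upper bound $i(\lex qm)\le 2^{\alpha}+q$, together with the bounds on $\alpha$ and on $q'$ as functions of $m$, taking logarithms, and rearranging, one is left with a quadratic inequality in $q$ which is designed to collapse to $q\le\tfrac12\bigl(5+\sqrt{9+24m}\bigr)$ (equivalently $(q-1)(q-4)\le 6m$). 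One would then dispose of the degenerate cases directly: if $q-q'$ is too small for the comparison to bite, or if $m>\binom{q-1}2$ so that $q$ is already forced near $\sqrt{2m}$, the bound holds trivially, and it is in any case vacuous for small $m$ (for $m\le 9$ the right-hand side is at least $m+1\ge\l(m)$).

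\emph{The hard part.} The step I expect to be the genuine obstacle is this last reconciliation. The quantity $i(\lex qm)$ grows exponentially, like $2^{\alpha}$ with $\alpha\approx\sqrt{q^2-2m}$, whereas the penalty for enlarging the lex component is only the geometric factor $3^{\,q-q'}$, and $m$ enters the bookkeeping only quadratically, through $\binom{q-\alpha}{2}$. Matching an exponential against a geometric factor, both tethered to a quadratic constraint, tightly enough to land the precise constant $24$ under the radical is delicate; in particular the star parameter $w$ in Lemma~\ref{lem:lexind} must be handled carefully, since a crude bound on the term $2^{\alpha-w-1}$ discards a factor of $2$ and the margin against the factor of $3$ is narrow --- this may require iterating the comparison over several values of $q'$, or choosing $q'$ so that $\lex{q'}m$ is an exact split graph. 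Once the comparison graph and the estimates above are fixed, the rest is routine algebra.
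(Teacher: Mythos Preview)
What you have written is a plan, not a proof, and the plan as stated does not reach the claimed constant. The decisive step is your ``comparison'': from $\J$-extremality of $\S{m+1}{q}{m}$ you get
\[
2^{\alpha}+q\ \ge\ i(\lex qm)\ \ge\ 3^{\,q-q'}\,i(\lex{q'}m)\ \ge\ 3^{\,q-q'}(q'+1),
\]
with $\alpha=q-k$ and $q'$ the least integer with $\binom{q'}{2}\ge m$. You then hope this collapses to $(q-1)(q-4)\le 6m$. It does not. From $2m=q^2-q-\alpha^2+\alpha+2w$ one has $\alpha\sim\sqrt{q^2-2m}$, while $q'\sim\sqrt{2m}$; writing $q=c\sqrt m$, your inequality becomes asymptotically $2^{\sqrt{c^2-2}\,\sqrt m}\ge 3^{(c-\sqrt2)\sqrt m}$, i.e.\ $\sqrt{c^2-2}\ge(c-\sqrt2)\log_2 3$. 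This is satisfied for all $c$ up to about $3.29$, so the one-shot comparison only yields $\l(m)\lesssim 3.29\sqrt m$, strictly weaker than the target $\sqrt{6m}\approx 2.45\sqrt m$. The transcendental $\log_2 3$ cannot produce the clean $24$ under the radical; no amount of bookkeeping on the lower-order terms $q$, $w$, $q'+1$ will repair this.

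The paper does precisely what you gesture at in your last paragraph: it iterates the comparison with $q'=q-1$ and $q'=q-2$, but using the \emph{exact} formula of Lemma~\ref{lem:lexind} rather than the crude envelope $2^{\alpha-1}<i(\lex qm)\le 2^\alpha+q$. One sets $G=L(n,k,w)$, $G'=L(n-1,m)\cup E_1$, $G''=L(n-2,m)\cup E_2$, tracks how $(k,w)$ change to $(k',w')$ and $(k'',w'')$ under removal of one or two vertices from the empty part, and shows by case analysis (splitting on whether $k'=k$ or $k'=k+1$, and on the value of $w$) that either $j(G')>j(G)$ or $j(G'')>\max\{j(G),j(G')\}$. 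Because the comparison is local, the factors of $3$ and the powers of $2$ cancel additively rather than multiplicatively, and no logarithm appears. The descent runs as long as $n\ge 3k+3$; solving $m=\binom{k}{2}+k(n-k)+w$ at the stopping point $n=3k+4$ is what produces the quadratic and hence the constant $24$. If you want to salvage your outline, the place to invest effort is the step-by-step comparison with $q'=q-1,\,q-2$ using Lemma~\ref{lem:lexind} exactly; the global comparison against the minimal $q'$ should be abandoned.
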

\begin{proof}
	Fix $m\in \N$.  The proof will roughly go as follows: starting with $R(m+1,m+1;m)=L(m+1,m)$, we remove vertices from the lex component of the graph $R(m+1,n;m)$ (either one or two at a time), and create a $R(m+1,n';m)$ for some $n'<n$.  If we show that the number of homomorphisms into $J$ from this new graph is more than the earlier ones, then we can continue this process.  In the end, we will determine how long this process can be run, giving the bound in the statement of the theorem.  Note that in removing a vertex from the lex component of $R(m+1,n;m)$, we can think of forming the lex component of $R(m+1,n-1;m)$ by removing a vertex from the empty side of the lex component of $R(m+1,n;m)$ and then distributing the edges adjacent to this removed vertex.  If there is enough room to build a star of the right size in the empty part, then the complete part remains unchanged in size.  If not, then the complete part increases in size.  Our proof splits according to which of these occur.
	
	In order to describe the above process precisely, we need to introduce a bit of notation.  Fix some $n\leq m+1$.  Let $R=R(m+1,n;m)$.  In each step, we compare $j(R)$ to either $j(R')$ or $j(R')$ and $j(R'')$, where $R'=R(m+1,n-1;m)$ and $R''=R(m+1,n-2;m)$.  If we show that $j(R')>j(R)$ or $j(R'')>\max\set{j(R),j(R')}$, then we show that either $\ell(m)\leq n-1$ or $\ell(m)\leq n-2$, respectively, and then run through the process again.  We note that in comparing $j(R)$ with either $j(R')$ or $j(R'')$, we need only to compare the lex component of $R$, which is $L(n,m)$, with the lex component of $R'$ along with one isolated vertex, or the lex component of $R''$ along with two isolated vertices.  This is equivalent to dividing each of these by $3^{m+1-n}$.  To this end, let $G=L(n,m)$, $G'=L(n-1,m)\cup E_1$, and $G''=L(n-2,m)\cup E_2$.  Suppose that $k$ and $w$ are integers so that $L(n,m)=L(n,k,w)$.  Also, let $w',k',w'',k''$ be integers such that $L(n-1,m)=L(n-1,k',w')$ and $L(n-2,m)=L(n-2,k'',w'')$.  
	
	Note that if $n=m+1$, then $G=L(n,m)=K_{1,m}=L(n,1,0)$, so that $k=1$ and $w=0$, and $G'=L(n-1,m)\cup E_1=L(n-1,1,1)\cup E_1$, so that $k'=1$ and $w'=1$ (provided $m\geq 3$).  If $m$ is large, and we continue to remove vertices from the lex component, then the star inside of the empty part continues to grow while the empty part shrinks.  Eventually (after about $m/2$ steps), there will be no more room in the empty part, and so $k$ increases to two and $w$ resets to either $0$ or $1$.  The process speeds up now since the removal of any vertex leaves two edges that need to be placed in the star.  So far, each step of this process either leaves $k$ unchanged or increases it by one.  However, as the process continues, $k$ can increase by more than one.  Our process will work provided the change in $k$ is at most one at every step.  This barrier will give the bound in the theorem, as will be made precise below.
	
	Finally, we are ready to outline the cases of the argument.  They are split essentially so that we can determine $w'$, $k'$, $w''$, and $k''$ in terms of $n$ and $k$.  Throughout, we will keep track of conditions on $n$ and $k$ that are required.
	
\begin{description}
	\item[Case 1 ($k=k'$)] Note that in this case, there is room in the empty side of $G=L(n,m)$ to place the $k$ edges that are incident with the new isolated vertex in $G'$.
	\begin{description}
	\item[Subcase 1a ($w\geq 1$)] In this case, we compare $j(G)$ and $j(G')$.  By definition, we know that $w\leq n-k-2$ which implies that there is a vertex in the empty part of $L(n,k,w)$ that is not part of the star with $w$ edges.  Thus, its degree is $k$ and is only adjacent to the vertices of the $K_k$.  Deleting this vertex from $L(n,k,w)$ leaves $k$ edges that need to be ``placed'' in $L(n-1,m)$.  Since, by assumption, $k'=k$, we know that these must be able to fit inside of the empty part of $L(n-1,k,w')$ and form a larger star with $w+k$ edges now.  Thus, $w'=w+k$ and so we have the following.
	\begin{align*}
	j(G')-j(G)&=3j(L(n-1,k,w+k))-j(L(n,k,w))\\
		&=3\left(2^{(n-1)-k-1}+2^{(n-1)-k-(w+k)-1}+k\right)-\left(2^{n-k-1}+2^{n-k-w-1}+k\right)\\
		&=3\left(2^{n-k-2}+2^{n-2k-w-2}+k\right)-\left(2^{n-k-1}+2^{n-k-w-1}+k\right)\\
		&=2^{n-k-2}+3\cdot 2^{n-2k-w-2}-2^{n-k-w-1}+2k\\
		&=2^{n-2k-w-2}\left(2^{k+w}+3-2^{k+1}\right)+2k\\
		&> 0,
	\end{align*}
	where the last step follows from the assumption that $w\geq 1$.
	\item[Subcase 1b ($w=0$ and $k\geq 2$)] In this case, we must show that
	\[
		j(G'')>\max\set{j(G),j(G')}.
	\]
	In fact, we will show that $j(G')<j(G)<j(G'')$.  We do this in two steps, starting with showing that $j(G')<j(G)$.  Note that since $k'=k$ and $w=0$, we have that $w'=k$ provided that $n-k\geq k$ or $n\geq 2k$.  As above, we have $n'=n-1$.  Thus,
	\begin{align*}
		j(G)-j(G')&=j(L(n,k,0))-3j(L(n-1,k,k))\\
		&=2^{n-k-1}+2^{n-k-0-1}+k-3\left(2^{n-k-2}+2^{n-2k-2}+k\right)\\
		&=4\cdot 2^{n-k-2}+k-3\cdot 2^{n-k-2}-3\cdot 2^{n-2k-2}-3k\\
		&=2^{n-k-2}-3\cdot 2^{n-2k-2}-2k\\
		&=2^{n-2k-2}(2^k-3)-2k\\
		&>0,
	\end{align*}
	provided that $k\geq 2$ and $n>2k+\log_2 k+2$.
	
	It remains to show that $j(G'')>j(G)$.  We do need to make one assumption here.  Namely, we need to assume that there is enough ``room'' for the $2k$ displaced edges from vertices that become isolates in $G''$.  This occurs as long as $2k+1\leq n-k-2$ which is equivalent to $n\geq 3k+3$.  In this case, we have
	\begin{align*}
		j(G'')-j(G)&=9j(L(n-2,k,2k))-j(L(n,k,0))\\
		&\geq 9\left(2^{(n-2)-k-1}+2^{(n-2)-k-2k-1}+k\right)-2^{n-k-1}-2^{n-k-0-1}-k\\
		&=9\left(2^{n-k-3}+2^{n-3k-3}+k\right)-2^{n-k}-k\\
		&=2^{n-k-3}+9\cdot 2^{n-3k-3}+8k\\
		&>0.
	\end{align*}
	\item[Subcase 1c ($w=0$ and $k=1$)] In this case, we need only to check that $j(G')>j(G)$.  To this end, we have
	\begin{align*}
		j(G')-j(G)&=3j(L(n-1,1,1))-j(L(n,1,0))\\
		&=3\left(2^{n-3}+2^{n-4}+1\right)-2^{n-2}-2^{n-2}-1\\
		&=3\cdot 2^{n-3}+3\cdot 2^{n-4}+2-2^{n-1}\\
		&=2^{n-4}+2\\
		&>0.
	\end{align*}
	\end{description}
	\item[Case 2 ($k'=k+1$)] In this case, when we remove a vertex from the lex graph, we again need to sort the $k$ displaced edges to form a new lex graph on $n-1$ vertices.  Since, by assumption, our complete side of lex increases in size by one, the center of the star in the empty side of $G$ must get filled up with edges and move to the complete side.  This amounts to having the $w+k$ edges we would want to be incident to this vertex run out of room.  This occurs when $w+k\geq n-k-2$ or $w\geq n-2k-2$, the size of the empty side of $G'$.  Thus, we have $n'=n-1$, $k'=k+1$, and $w'=w+k-(n-k-2)=w+2k-n+2$.
	\begin{description}
	\item[Subcase 2a ($w=n-2k-2$ or $w=n-2k-1$)] In this case, we show that $j(G')>j(G)$.  Note that if $w=n-2k-2$, then $w'=0$ and if $w=n-2k-3$, then $w'=1$.  We have
	\begin{align*}
		j(G')-j(G)&=3j(L(n-1,k+1,w'))-j(L(n,k,w))\\
		&=3\left(2^{n-1-(k+1)-1}+2^{n-1-(k+1)-w'-1}+k+1\right)-2^{n-k-1}-2^{n-k-w-1}-k\\
		&=3\left(2^{n-k-3}+2^{n-k-w'-3}+k+1\right)-2^{n-k-1}-2^{n-k-w-1}-k\\
		&=-2^{n-k-3}+3\cdot 2^{n-k-w'-3}-2^{n-k-w-1}+2k+3\\
		&=2^{n-k-3}\left(-1+3\cdot 2^{-w'}-2^{-w+2}\right)+2k+3.
	\end{align*}
	If $w=n-2k-2$ and so $w'=0$, then
	\[
		-1+3\cdot 2^{-w'}-2^{-w+2}=-1+3\cdot 2^0 -2^{-n+2k+4}>0,
	\]
	provided that $n\geq 2k+4$.  If $w=n-2k-1$ and so $w'=1$, then
	\[
		-1+3\cdot 2^{-w'}-2^{-w+2}=-1+3\cdot 2^1 -2^{-n+2k+3}>0,
	\]
	provided that $n\geq 2k+1$.  In either case, we have $j(G')-j(G)>0$ as long as $2^{n-k-3}\geq 2k+3$ which is equivalent to $n\geq k+\log_2(2k+3)+3$.
	
	\item[Subcase 2b ($w\geq n-2k$)]  In this case, we will need to compare $j(G)$ with both $j(G')$ and $j(G'')$.  We again show that $j(G'')>j(G)>j(G')$.  We first show that $j(G)>j(G')$.  As in the previous subcase, we have $n'=n-1$, $k'=k-1$, and $w'=w+2k-n+2$.  In fact, we are just doing the calculation in the previous case, but multiplying by $-1$.  Thus, we have
	\begin{align*}
		j(G)-j(G')&=j(L(n,k,w))-3(j(L(n-1,k+1,w+2k-n+2)))\\
		&=2^{n-k-1}+2^{n-k-w-1}+k-3\left(2^{n-k-3}+2^{2n-3k-w-5}+k+1\right)\\
		&=2^{n-k-3}\left(1-3\cdot 2^{n-2k-w-2}+2^{-w+2}\right)-2k-3.
	\end{align*}
	Note that
	\[
		1-3\cdot 2^{n-2k-w-2}+2^{-w+2}>0 \qquad\text{if and only if}\qquad 2^w+4>3\cdot 2^{n-2k-2}.
	\]
	But the latter inequality certainly holds provided that $w\geq n-2k$, our assumption in this case.  Thus, $j(G)-j(G')>0$ provided that $n\geq k+\log_2(2k+3)+3$.
	
	Lastly, we need to show that $j(G'')>j(G)$.  Similar to Subcase 1b, we will assume that when we remove a vertex from the lex component of $G'$, there is enough room in the empty side of the lex component of $G''$ to fit the $k+1$ displaced edges.  This amounts to assuming that $w''=w'+k\leq n-k-5$ since the empty side of the lex component of $G''$ has size $n-k-3$.  But $w'=w+2k-n+2$ and $w\leq n-k-2$, so this assumption is met provided 
\[
	w''=w'+k=w+3k-n+2\leq (n-k-2)+3k-n+2=2k\leq n-k-2,
\]
i.e., $n\geq 3k+2$.  With this assumption, note that $n''=n-2$, $k''=k+1$, and $w''=w+3k-n+2$.  So, we have
\begin{align*}
	j(G'')-j(G)&=9j(L(n-2,k+1,w+3k-n+2))-j(L(n,k,w))\\
	&=9\left(2^{n-2-(k+1)-1}+2^{n-2-(k+1)-(w+3k-n+2)-1}+k+1\right)-2^{n-k-1}-2^{n-k-w-1}-k\\
	&=9\left(2^{n-k-4}+2^{2n-4k-w-6}+k+1\right)-2^{n-k-1}-2^{n-k-w-1}-k\\
	&=2^{n-k-4}+9\cdot 2^{2n-4k-w-6}-2^{n-k-w-1}+8k+9\\
	&>0,
\end{align*}
	provided that $w\geq 3$.  This is true provided $w\geq n-2k\geq 3$ or if $n\geq 2k+3$.
	\end{description}
\end{description}
	
	Note that in Subcase 1b, we assume that $n\geq 3k+3$.  Note that if this assumption holds, it implies that $n>2k+\log_2 k+2$, and $n\geq k+\log_2(2k+3)+3$, the assumptions in Subcase 1b, and Subcases 2a and b, respectively.  Also, we assume $n\geq 2k+3$ in Subcase 2a, which is also implied by $n\geq 3k+3$.
	
	All that remains is to show that the condition $n\geq 3k+3$ implies the bound in theorem.  Since we have assumed that $m\geq 1$, we know that $k\geq 1$, so this assumes that $n\geq 6$.  Since, initially, we have $n\leq m+1$, this means we have $m\geq 5$.  The cases $1\leq m\leq 4$ are easy to check.  In fact, in each of these cases, our bound is larger than $m+1$, the largest possible value of $\ell(m)$.  If $m\geq 5$, note that we can run the process provided that $n\geq 3k+3$ and each run removes either one or two vertices from the lex component.  We will calculate $n$ in terms of $m$ provided that $n=3k+4$.  (While our process may stop at $n=3k+3$ and not $3k+4$, our bound improves if we assume the former.)  In fact, it is easier to calculate $k$ and then use this to find $n$.  To this end, note that
	\[
		m=\binom{k}2+k(n-k)+w=\binom{k}2+k(2k+4)+w.
	\]
	Expanding this expression and solving for $k$, we get
	\[
		k=\frac{-3\pm\sqrt{9+24m-12w}}{6}.
	\]
	The negative root is contradictory to our assumption that $k\geq 0$, and so we see that
	\begin{align*}
		\ell(m)&\leq n\\
		&=3k+4\\
		&=\frac{-3+\sqrt{9+24m-12w}}2+4\\
		&\leq \frac{5+\sqrt{9+24m}}2,
	\end{align*}
	completing the proof of the theorem.
\end{proof}


\section{Future directions} 
\label{sec:future}

A question remains for $\l(m)$, namely, how good is the bound in Theorem~\ref{thm:main}.  Firstly, a trivial lower bound for $\l(m)$ is as follows.

\begin{lemma}
	For any $m\in \mathbb{N}$, we have
	\[
		\l(m)\geq \frac{1}{2}\left(1+\sqrt{1+8m}\right).
	\]
\end{lemma}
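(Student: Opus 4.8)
The plan is to recognize that this bound has nothing to do with counting homomorphisms: it is forced simply by the requirement that a lex component on $q$ vertices be large enough to carry $m$ edges. Consequently the argument will be short, and the bound is expected to be weak.

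First I would observe that the set over which $\l(m)$ is defined is nonempty. Applying Theorem~\ref{thm:thrtothr} with the loop-threshold image graph $\J$ and with $n=m+1$, there is a threshold graph in $\G(m+1,m)$ that is $\J$-extremal; by Theorem~\ref{ext_composition}, every such graph has the form $E_{m+1-q}\cup \lex{q}{m}=\S{m+1}{q}{m}$ for some integer $q$. Fix one such $q$. By the definition of $\l(m)$ as the maximum of all $q$ for which $\S{m+1}{q}{m}$ is $\J$-extremal, we immediately get $\l(m)\ge q$.

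Next I would extract the numerical inequality from the admissibility constraint on $\S{m+1}{q}{m}$, namely $0\le m\le\binom{q}{2}$. The inequality $m\le\binom{q}{2}$ rearranges to $q^2-q-2m\ge 0$, and since $q\ge 1>0$ this is equivalent to $q\ge\frac{1}{2}\left(1+\sqrt{1+8m}\right)$. Combining this with $\l(m)\ge q$ gives the stated lower bound.

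There is essentially no obstacle here; the only point requiring a moment's care is the appeal to Theorems~\ref{thm:thrtothr} and~\ref{ext_composition}, which is what guarantees that a $\J$-extremal graph of the prescribed shape exists, so that the maximum defining $\l(m)$ is genuinely attained. The weakness of the bound relative to Theorem~\ref{thm:main} is to be expected, since this argument discards all information about the size of $i(\lex{q}{m})$ and uses only that $\lex{q}{m}$ must be a legitimate graph with $m$ edges.
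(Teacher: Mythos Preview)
Your proof is correct and follows essentially the same idea as the paper's: the bound comes solely from the feasibility constraint $m\le\binom{q}{2}$ on the lex component, with no homomorphism counting involved. The paper phrases this as a one-line contradiction (if $\l(m)<\tfrac12(1+\sqrt{1+8m})$ then $\binom{\l(m)}{2}<m$, impossible), while you give the direct version and are slightly more careful in invoking Theorems~\ref{thm:thrtothr} and~\ref{ext_composition} to ensure the defining set for $\l(m)$ is nonempty; the mathematical content is the same.
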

\begin{proof}
Assume not, so that $\l(m)<\frac{1}{2}(1+\sqrt{1+8m})$.  Then 
\[
{\l(m)\choose 2}<{{\frac{1}{2}(1+\sqrt{1+8m})}\choose 2}= m.
\]
Clearly this cannot be as it would imply a lex graph on $\l(m)$ vertices contains more than $\l(m)\choose 2$ edges. 
\end{proof}

So, summarizing our results, we have that there is a constant $c$ such that for any $m\in \N$,
\[
	1.4141\sqrt{m}\leq \l(m)\leq 2.4485\sqrt{m}+c.
\]
Through computer testing, it appears that the correct answer for $\l(m)$ is about $1.8\sqrt{m}$.  It would be nice to obtain an upper (and lower) bound that is closer to this.


\bibliographystyle{amsplain}
\bibliography{isoind}

\end{document}